\definecolor{red}{rgb}{1,0,0}
\definecolor{blue}{rgb}{.2,.2,.8}
\newtheorem{theorem}{Theorem}
\newtheorem{corollary}[theorem]{Corollary}
\newtheorem{proposition}[theorem]{Proposition}
\newtheorem{conjecture}[theorem]{Conjecture}
\newtheorem{lemma}[theorem]{Lemma}
\newtheorem{question}[theorem]{Question}
\theoremstyle{definition}
\newtheorem{example}[theorem]{Example}
\def\la{\lambda}
\def\cP{\mathcal P}
\def\cQ{\mathcal Q}
\def\cB{\mathcal B}
\def\cS{\mathcal S}
\def\ra{\rightarrow}
\def\bth{\begin{theorem}}
\def\eth{\end{theorem}}
\def\ble{\begin{lemma}}
\def\ele{\end{lemma}}
\def\bco{\begin{corollary}}
\def\eco{\end{corollary}}
\def\bpr{\begin{proposition}}
\def\epr{\end{proposition}}
\def\bcon{\begin{conjecture}}
\def\econ{\end{conjecture}}
\def\bqu{\begin{question}}
\def\equ{\end{question}}
\def\bprf{\begin{proof}}
\def\eprf{\end{proof}}
\def\ben{\begin{enumerate}}
\def\een{\end{enumerate}}
\def\beq{\begin{equation}}
\def\eeq{\end{equation}}
\def\De{\Delta}
\def\de{\delta}
\def\be{\beta}
\def\cBR{\mathcal{B}\mathcal{R}}
\def\cPR{\mathcal{P}\mathcal{R}}
\newcommand{\fl}[1]{\lfloor #1 \rfloor}
\newcommand{\ce}[1]{\lceil #1 \rceil}
\newcommand{\flf}[2]{\left\lfloor\frac{#1}{#2}\right\rfloor}
\DeclareMathOperator{\pre}{pre}
\DeclareMathOperator{\prh}{prh}
\DeclareMathOperator{\ImP}{ImP}
\DeclareMathOperator{\ImB}{ImB}
\DeclareMathOperator{\Mod}{mod}
\begin{document}

\title{Elementary symmetric partitions}

\author{
Cristina Ballantine\\[-5pt]
\small Department of Mathematics and Computer Science, College of the Holy Cross,\\[-5pt]
\small Worcester, MA 01610, USA, {\tt cballant@holycross.edu}
\\
George Beck\\[-5pt]
\small Department of Mathematics and Statistics, Dalhousie University,\\[-5pt]
\small Halifax, NS, B3H 4R2, Canada, {\tt george.beck@gmail.com}
\\
Mircea Merca\\[-5pt]
\small Department of Mathematical Methods and Models,\\[-5pt] 
\small  Fundamental Sciences Applied in Engineering Research Center\\[-5pt]
\small National University of Science and Technology Politehnica Bucharest,\\[-5pt]
\small RO-060042 Bucharest, Romania, {\tt mircea.merca@upb.ro}\\[-5pt]
\small and\\[-5pt]
\small Academy of Romanian Scientists, \\[-5pt]
\small RO-050044 Bucharest, Romania
\\
Bruce E. Sagan\\[-5pt]
\small Department of Mathematics, Michigan State University,\\[-5pt]
\small East Lansing, MI 48824, USA, {\tt sagan@math.msu.edu}
}

\date{\today\\[10pt]
	\begin{flushleft}
	\small Key Words: bijection, binary partition, $d$-ary partition, elementary symmetric polynomial, forward difference operator, generating function, integer partition, rooted partition
	                                       \\[5pt]
	\small AMS subject classification (2020):  05A17, 05E05  (Primary) 05A15, 05A19  (Secondary)
%https://mathscinet.ams.org/mathscinet/msc/msc2020.html
	\end{flushleft}}

\maketitle

\begin{abstract}
Let $e_k(x_1,\ldots,x_\ell)$ be an elementary symmetric polynomial and let $\la=(\la_1,\ldots,\la_\ell)$ be an integer partition.  Define $\pre_k(\la)$ to be the partition whose parts are the summands in the evaluation $e_k(\la_1,\ldots,\la_\ell)$.  The study of such partitions was initiated by Ballantine, Beck, and Merca who showed (among other things) that $\pre_2$ is injective as a map on binary partitions of $n$.  In the present work we derive a host of identities involving the sequences which count the number of parts of a given value in the image of $\pre_2$.  These include generating functions, explicit expressions, and formulas for forward differences.  We generalize some of these to $d$-ary partitions and explore connections with color partitions.  Our techniques include the use of generating functions and bijections on rooted partitions.  We end with a list of conjectures and a direction for future research.
\end{abstract}

%\allowdisplaybreaks

\section{Definitions and introduction}

The focus of this paper will be integer partitions.
For any set $S$ we denote the cardinality of $S$ by $|S|$.
Let $\la=(\la_1,\la_2,\ldots,\la_\ell)$ be an {\em integer partition of $n$}, that is, a weakly decreasing sequence of positive integers whose sum is $n$.  The $\la_i$ are called {\em parts}.  We let
$$
|\la| =\sum_i \la_i
$$
and 
$$
\ell(\la) = \ell.
$$
We call $|\la|$  the {\em weight} or {\em size} of $\la$, and $\ell(\la)$ is the {\em length}.  The use of vertical bars for cardinality or weight should be distinguished by context.  
Note that if $n<0$ then no such partitions exists, while there is a unique partition of $0$, namely the empty partition $\epsilon$.
Also define
$$
\cP(n)=\{\la \mid  \text{$\la$ is a partition of $n$}\},
$$
as well as
$$
p(n) =|\cP(n)|,
$$
and
$$
\cP_k(n) = \{\la\in\cP(n) \mid \ell(\la)\ge k\}.
$$
For example,
$$
\cP_3(5) = \{(1,1,1,1,1),\ (2,1,1,1),\ (2,2,1),\ (3,1,1)\}.
$$

We will be particularly interested in {\em binary partitions}, which are those such that all parts are a power of $2$. 
Analogous to the notation just defined, let
$$
\cB(n)=\{\la \mid  \text{$\la$ is a binary partition of $n$}\},
$$
and
$$
\cB_k(n) = \{\la\in\cB(n) \mid \ell(\la)\ge k\}.
$$
To illustrate,
$$
\cB_3(5) = \{(1,1,1,1,1),\ (2,1,1,1),\ (2,2,1)\}.
$$

We will be combining partitions and elementary symmetric polynomials.  The {\em $k$th elementary symmetric polynomial} in a set of variables 
$\{x_1,x_2,\ldots,x_\ell\}$  is
$$
e_k(x_1,x_2,\ldots,x_\ell) = \text{ the sum of all square-free, degree $k$ monomials in the $x_i$}.
$$ 
Note that if $\ell<k$ then there are no such monomials so that $e_k(x_1,x_2,\ldots,x_\ell)$ is the empty sum and hence equals $0$.  For a less trivial example,
$$
e_2(x_1,x_2,x_3,x_4) = x_1 x_2 + x_1 x_3 + x_1 x_4 + x_2 x_3 + x_2 x_4 + x_3 x_4.
$$
Given a partition $\la=(\la_1,\la_2,\ldots,\la_\ell)$ with $\ell\ge k$, we define $\pre_k(\la)$ to be the partition whose parts are the summands in the evaluation $e_k(\la_1,\la_2,\ldots,\la_\ell)$.

To illustrate, if $\la=(3,2,1,1)$ then
$$
e_2(3,2,1,1) = 3\cdot 2 + 3\cdot 1 + 3\cdot 1 + 2\cdot 1 + 2\cdot 1 + 1\cdot 1
$$
so that
$$
\pre_2(3,2,1,1) = (6,3,3,2,2,1).
$$
We call $\pre_k(\la)$ an {\em elementary symmetric partition}.
Thus we have defined a map 
$$
\pre_k:\cP_k(n)\ra\cP,
$$
where $\cP$ is the set of all integer partitions.

The function $\pre_k$ was first studied by Ballantine, Beck and Merca~\cite{bbm:pesp}.  In particular, they made the following conjecture, among others.
\begin{conjecture}[\cite{bbm:pesp}]
\label{prek:con}
For any $k\ge1$ and $n\ge0$, the map $\pre_k:\cP_k(n)\ra\cP$ is injective.   
\end{conjecture}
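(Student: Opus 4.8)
The plan is to work one value of $k$ at a time, encoding the relevant multiplicity data by Dirichlet polynomials; this settles $k=2$ completely and pinpoints exactly what is missing for $k\ge3$. Fix $n$ and suppose $\pre_k(\la)=\pre_k(\mu)$ with $\la,\mu\in\cP_k(n)$; we want $\la=\mu$. Since $\pre_k(\la)$ has exactly $\binom{\ell(\la)}{k}$ parts and $\ell\mapsto\binom{\ell}{k}$ is strictly increasing for $\ell\ge k$, we first get $\ell(\la)=\ell(\mu)=:\ell$ and may fix the length. One should also note how much of $\la$ is already visible in $\pre_k(\la)$: the parts of $\pre_k(\la)$ sum to $e_k(\la)$, their product is $\big(\prod_i\la_i\big)^{\binom{\ell-1}{k-1}}$ so we recover $\prod_i\la_i$, and for every $r$ the $r$th power sum of $\pre_k(\la)$ equals $e_k(\la_1^r,\dots,\la_\ell^r)$. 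Together with the hypothesis $|\la|=n=|\mu|$ this is a large amount of data; the question is whether it pins down $\la$.

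\emph{The case $k=2$.} For $a\ge1$ let $m_\la(a)$ be the number of parts of $\la$ equal to $a$ and put $D_\la(s)=\sum_{a\ge1}m_\la(a)\,a^{-s}$, a finite exponential sum. A direct count shows that the multiplicity of $v$ in $\pre_2(\la)$ is $\tfrac12\big(\sum_{ab=v}m_\la(a)m_\la(b)-\sum_{a^2=v}m_\la(a)\big)$, so that $\pre_2(\la)$ is encoded by $\tfrac12\big(D_\la(s)^2-D_\la(2s)\big)$, and $\pre_2(\la)=\pre_2(\mu)$ is equivalent to the identity $D_\la(s)^2-D_\la(2s)=D_\mu(s)^2-D_\mu(2s)$ for all $s$. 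Writing $R:=D_\la-D_\mu$, this rearranges to
$$R(s)\,\big(D_\la(s)+D_\mu(s)\big)=R(2s).$$
Now $R(-1)=\sum_a\big(m_\la(a)-m_\mu(a)\big)a=|\la|-|\mu|=0$, and evaluating the displayed relation successively at $s=-1,-2,-4,\dots$ gives $R(-2^j)=0$ for all $j\ge0$. Writing $R(-2^j)=\sum_a c_a a^{2^j}$ with $c_a:=m_\la(a)-m_\mu(a)$ finitely supported, if some $c_a\ne0$ then with $A:=\max\{a:c_a\ne0\}$ we have $0=A^{-2^j}R(-2^j)=c_A+\sum_{a<A}c_a(a/A)^{2^j}\to c_A$ as $j\to\infty$, a contradiction. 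Hence $R\equiv0$ and $\la=\mu$. After the substitution $s\mapsto d^{-s}$ the same argument proves the conjecture for $d$-ary partitions for every $d\ge2$, recovering the binary case of~\cite{bbm:pesp}; it also locates the role of the hypothesis $|\la|=n$, which enters only as the seed $R(-1)=0$ and is genuinely necessary — for instance $\pre_2(128,64,16,2)=\pre_2(256,32,8,4)$ while these partitions have different sizes.

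\emph{Higher $k$ and the main obstacle.} In general $\pre_k(\la)$ is encoded by $\tfrac1{k!}\,\Phi_k\big(D_\la(s),D_\la(2s),\dots,D_\la(ks)\big)$, where $\Phi_k$ is the universal polynomial supplied by the Möbius function of the partition lattice (so $\Phi_2(x_1,x_2)=x_1^2-x_2$, $\Phi_3(x_1,x_2,x_3)=x_1^3-3x_1x_2+2x_3$, and so on), equivalently by $[z^k]\exp\!\big(\sum_{j\ge1}\tfrac{(-1)^{j-1}}{j}D_\la(js)\,z^j\big)$. The difficulty is that $\Phi_k\big(D_\la(s),\dots,D_\la(ks)\big)=\Phi_k\big(D_\mu(s),\dots,D_\mu(ks)\big)$ no longer collapses to a relation purely among $R$ and its dilates: it also involves $D_\la(js)$ and $D_\mu(js)$ for $2\le j\le k-1$ individually, over which we have no independent control, so the clean iteration $R(-1)\Rightarrow R(-2)\Rightarrow\cdots$ breaks down. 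Finding a substitute for that iteration is the crux. I would try: (i) induction on $k$, exploiting the splitting $\pre_k(\la)=\big(\la_1\cdot\pre_{k-1}(\la_2,\dots,\la_\ell)\big)\sqcup\pre_k(\la_2,\dots,\la_\ell)$ (according to whether a $k$-subset uses the largest part, each part of the first block scaled by $\la_1$) to reduce to smaller $k$; (ii) induction on $\ell$; or (iii) a more global use of the fact that $\Phi_k\big(D_\la(s),\dots\big)=\Phi_k\big(D_\mu(s),\dots\big)$ holds identically in $s$, together with the already-known leading coefficient $\prod_i\la_i$ of $\Phi_k$, to build enough linear relations among the values $R(0),R(-1),R(-2),\dots$ to force $R\equiv0$. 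A sensible first target is the conjecture for all $k$ on $d$-ary partitions, where $D_\la$ becomes an ordinary polynomial in $x=d^{-s}$ with nonnegative integer coefficients and the statement turns into a concrete polynomial-identity problem; a solution there would most likely reveal the mechanism needed in general. I expect replacing the $k=2$ iteration to be the real obstacle, and it may well require genuinely new input beyond the symmetric-function identities recorded above.
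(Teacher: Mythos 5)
The statement you are addressing is presented in the paper only as a conjecture: the authors prove nothing beyond the $d$-ary case of $k=2$ (Section 3) and the binary case quoted from \cite{bbm:pesp}, so there is no proof in the paper to compare against. Within that context, your $k=2$ argument is correct, complete, and strictly stronger than anything the paper establishes. Encoding multiplicities by $D_\la(s)=\sum_a m_a(\la)\,a^{-s}$, the identity $R(s)\bigl(D_\la(s)+D_\mu(s)\bigr)=R(2s)$ with seed $R(-1)=|\la|-|\mu|=0$ does propagate to $R(-2^j)=0$ for all $j\ge0$, and your limit argument on the largest support point forces $R\equiv0$; I verified each step, including the multiplicity formula for $\pre_2(\la)$, which matches the paper's equation \eqref{m:rec} for $k$ a product of two parts. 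This subsumes the paper's Theorem on $\cP_k(n,d)$, whose proof instead solves \eqref{m:rec} recursively for the $m_k(\la)$ and needs the $d$-ary hypothesis (via $n\equiv m_1(\la)\pmod d$) to get started when $m_1(\mu)=0$; your argument replaces that structural crutch with the single evaluation $R(-1)=0$, and your example $\pre_2(128,64,16,2)=\pre_2(256,32,8,4)$ correctly shows the fixed-weight hypothesis cannot be dropped.

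The gap is that the conjecture asserts injectivity for \emph{all} $k\ge1$, and for $k\ge3$ you prove nothing. Your diagnosis of the obstruction is accurate — the Newton-type identity $k!\,D_{\pre_k(\la)}(s)=\Phi_k\bigl(D_\la(s),\dots,D_\la(ks)\bigr)$ no longer factors through $R$ and its dilates, because the cross terms involve $D_\la(js)$ and $D_\mu(js)$ separately for $2\le j\le k-1$ — but the three remedies you list are only sketched directions, none of which is carried out or shown to close the loop. In particular, the splitting $\pre_k(\la)=\bigl(\la_1\cdot\pre_{k-1}(\la_2,\dots,\la_\ell)\bigr)\uplus\pre_k(\la_2,\dots,\la_\ell)$ is a correct multiset identity, but using it for an induction requires disentangling the two blocks inside the unknown partition $\pre_k(\la)$, and you give no mechanism for that. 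So as a proof of the stated conjecture the proposal is incomplete; as partial progress, the $k=2$ case for arbitrary (not merely $d$-ary) partitions appears to be a genuine new result worth writing up on its own.
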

Note that $\pre_1$ is clearly injective since $\pre_1(\la)=\la$ for any partition.
The following weaker form of the conjecture was proved in [\cite{bbm:pesp}], where one restricts $\pre_2$ to binary partitions. In it, $\cB$ is the set of all binary partitions. 
\bth[\cite{bbm:pesp}]
For any $n\ge0$, the map $\pre_2:\cB_2(n)\ra\cB$ is injective.\hfill\qed
\eth
Because of this result, we will focus part of this work on binary partitions.
We will use the notation
$$
\ImP_k(n) = \pre_k(\cP_k(n))
$$
and
$$
\ImB_k(n) = \pre_k(\cB_k(n))
$$
for the images of the $\pre_k$ function applied to the full domain of partitions or just the binary partitions, respectively, of length at least $k$.

We will be interested in counting multiplicities in the image sets of $\pre_2$.  If $\la$ is a partition and  $i$ is a positive integer  then the {\em multiplicity} of $i$ in $\la$, denoted $m_i(\la)$, is the number of times $i$ occurs as a part in $\la$.  Extend this notation to finite sets $S$ by
$$
m_i(S) = \sum_{\la\in S} m_i(\la).
$$
  
Our techniques will include manipulation of generating functions and bijections.  For the latter, we modify the notion of a partition as follows.
A \textit{rooted partition} is a partition $\la$ with one or more parts  distinguished and  
marked with hats.  
These special parts are called the {\em roots} of $\la$.
The position   of the roots  among the other parts of the same size matters. 
For example, 
$$
\la = (4,3,\hat{3},3,3,\hat{1},1,\hat{1})
$$
has three roots, namely $\hat{3}$ and two copies of $\hat{1}$.  Furthermore, it is different from the rooted partition
$$
\la' = (4,3,3,\hat{3},3,\hat{1},1,\hat{1}).
$$
Rooted partitions were introduced by Sagan~\cite{sag:rpn} in order to give combinatorial proofs of some results of Merca and Schmidt~\cite{MS:pir,MS:pfp} involving the partition, Euler totient, and M\"obius functions.  We denote by 
\begin{align*}
\cBR_i(n) &= \text{ the set of rooted binary partitions of $n$ with a single root $\hat{\imath}$},\\
\cBR_{i,j}(n) &= \text{ the set of rooted binary partitions of $n$ with exactly two roots $\hat{\imath}$ and 
$\hat{\jmath}$}.
\end{align*}
The reason that rooted partitions will be useful is as follows.  Suppose that a part $k$ in 
$\pre_2(\la)$ came from multiplying parts $i$ and $j$ of $\la$.  Then one can think of $k$ as being associated with the partition obtained by rooting $\la$ at $i$ and $j$.  Thus we can transfer information about the parts of the range partition into properties of doubly rooted partitions in the domain which will be easier to work with. 
This will be made precise in Lemma~\ref{2^k:ImB} below.

There are two operations which will be useful for us when working with (rooted) partitions.
If $\la$ and $\mu$ are partitions then their {\em direct sum} is  $\la\oplus\nu$ which is obtained by, for each $i$, concatenating the string of $i$'s in $\la$ with the string of $i$'s in $\nu$, including any $\hat{\imath}$'s which exist.  It is important when considering roots to have the parts from $\la$ before those in $\mu$.  For example
$$
(3,\hat{3},3,2,1,1)\oplus (3,2,2,\hat{1},1) = (3,\hat{3},3,3,2,2,2,1,1,\hat{1},1).
$$
For the second operation, if  $m$ is a positive integer and $\la$ is a partition of $n$  then we write $m\la$ for the partition of $mn$ whose parts are the parts of $\la$ multiplied by $m$. If $\mu$ is a partition of $n$ such that all  parts of $\mu$ are multiples of $m$, we write $\mu/m$  for the partition of $n/m$ whose parts are the parts of $\mu$ divided by $m$. 

The rest of this article is structured as follows.  In the next section we will investigate the sequences $m_i(\ImB_2(n))$ for $i=1$, $2$, and $4$.  We will derive generating functions as well as expressions for both the sequences themselves and their forward differences.  Section~\ref{dp} will extend various  results on binary partitions to those which are $d$-ary in that every part is a power of a fixed $d$.  In particular, we will show that $\pre_2$ is injective when restricted to $d$-ary partitions of $n$ and consider the number of $1$'s and $d$'s in the image.
Section~\ref{cp} shows that there is a close relationship between the multiplicity sequences we are studying and color partitions.  A color partition is one where the parts have been assigned certain colors such that the number of possible colors depends on the size of the part.  We end with a section of conjectures and a direction for future work.

\section{Binary partitions}

It will be useful for our combinatorial proofs to have an interpretation of the number of times $2^k$ appears in $\ImB_2(n)$ in terms of rooted partitions. 
\ble
\label{2^k:ImB}
For $k\ge0$
$$
m_{2^k}(\ImB_2(n)) = 
\sum_{i=0}^{\fl{k/2}}\, |\cBR_{2^i,2^{k-i}}(n)|.
$$
\ele
\bprf
Since the map $\pre_2$ is injective on binary partitions, each part equal to $2^k$ in $\mu\in\ImB_2(n)$ comes from multiplying two parts $2^i$ and $2^{k-i}$ in its preimage
$\la\in\cB(n)$.  And to prevent double counting we must assume $i\le\fl{k/2}$.
But such a pair can be associated with a rooting of $\la$ at these two elements, and the lemma follows.
\eprf

The next theorem concerns the number $a_n$ of $1$'s in $\ImB_2(n)$.  It gives a generating function, various formulas for $a_n$ including  connections with a sequence from the OEIS  and with the sum-of-divisors function, as well as  expressions for the corresponding difference sequence.
For any OEIS sequence, we will use $S(n)$ for the $n$th term of $S$, where $S$ is the sequence number in the Encyclopedia.  
Also, we use the {\em forward difference operator} on any sequence $(s_n)_{n\ge1}$, which is the sequence 
 with $n$th element
 $$
 \De s_n = s_{n+1}-s_n.
 $$ 
 Finally, recall the classical formula
$$
n\,p(n) = \sum_{k=1}^n \sigma_1(k)\, p(n-k),
$$
where $\sigma_1(k)$ is the sum of the positive divisors of $k$.  For our analogue of this identity we let
\beq
\label{be(n)}
\be(n) = \sum_{2^i | n} 2^i
\eeq
be the sum of the divisors of $n$ which are powers of two.
We note that part (g) of the following theorem was proved using algebraic techniques in~\cite[Theorem 11]{bbm:pesp}. Here we give a combinatorial proof using doubly rooted partitions. 
\bth
\label{m1B}
For $n\ge0$, let
$$
a_n = m_1(\ImB_2(n)).
$$
\ben
\item[(a)] We have
$$
\sum_{n\ge0} a_n\, q^n = \frac{q^2}{(1-q)^2}\ \prod_{i\ge0} \frac{1}{{1-q^{2^i}}}.
$$
\item[(b)]  For $n\ge0$, 
$$
a_n = \sum_{i\ge1} (i-1)\ |\cB(n-i)|.
$$
\item[(c)] For $n\ge0$,
$$
a_n = \sum_{i\ge 0} \binom{n-2i}{2}\ |\cB(i)|.
$$
\item[(d)] For $n\ge0$,
$$
(n-2)\,a_n=\sum_{k=1}^n (\be(k)+ 2)\,a_{n-k}.
$$
\item[(e)]  The sequence $(a_n)_{n\ge1}$ satisfies $a_1=0$ and, for $n\ge 2$,
$$
a_n = {\rm A}131205(n-1).
$$
\item[(f)]  For $n\ge0$, there is this connection with the sequence \emph{A000123} via
$$
\De a_n = |\cB(2n-2)|.
$$

\item[(g)] For $n\ge0$,
$$
\De a_n =  a_{\fl{(n+2)/2}} + a_{\ce{(n+2)/2}}.
$$
\een
\eth
\bprf
(a)  From the case $k=0$ of Lemma~\ref{2^k:ImB} we have
\beq
\label{a=BR}
a_n=|\cBR_{1,1}(n)|.
\eeq
Say that the roots of partition $\la$ are at positions $j$ and $j+k$ where $j,k\ge1$.  Furthermore, the remaining parts of $\la$ form a binary partition. 
Thus
$$
\sum_{n\geq 0}a_n\,q^n
=\sum_{j,k\geq 1}q^j\, q^k\ \prod_{i\geq 0}\frac{1}{1-q^{2^i}}
= \frac{q^2}{(1-q)^2}\ \prod_{i\ge0} \frac{1}{{1-q^{2^i}}}.
$$

\medskip

(b) We give two proofs, one algebraic and one combinatorial.  For the first, write the generating function from (a) as
$$
\sum_{n\ge0} a_n\, q^n  = \frac{q^2}{(1-q)^2} \cdot \prod_{i\ge0} \frac{1}{{1-q^{2^i}}}= 
\left(\sum_{n\ge1} (n-1) q^n\right)\cdot \left(\sum_{n\ge0} |\cB(n)| q^n\right)
$$
and take the coefficient of $q^n$ in the first and third expressions.

For the combinatorial proof, we use equation~\eqref{a=BR}. If $\la\in\cBR_{1,1}(n)$ then write
$\la = \mu\oplus  \nu$
where $\nu$ consists of all the elements after and including the first $\hat{1}$. 
Suppose $|\nu|=i$.  Then there are $i-1$ ways to choose the second root in $\nu$.  And there are $|\cB(n-i)|$ ways to choose $\mu$.  Summing gives the result.

\medskip

(c) We give two proofs.  Rewriting (a)  we have 
$$
\sum_{n\geq 0} a_n\,q^n = \frac{q^2}{(1-q)^3} \cdot \prod_{i\geq 1} \frac{1}{1-q^{2^i}} 
= \left( \sum_{n\geq 0} \binom{n}{2}q^n \right) \left( \sum_{n\geq 0} |\cB(n)|\,q^{2n} \right)
$$
and coefficient extraction finishes the demonstration.

Combinatorially, we once more appeal to~\eqref{a=BR} and write 
$\la\in\cBR_{1,1}(n)$ 
as $\la = \mu\oplus  \nu$
where now $\nu$ contains all the $1$'s and 
$\hat{1}$'s.  Since $\mu$ contains all the positive powers of two we have 
$\mu/2\in\cB(i)$ for some $i$ where $|\mu|=2i$.  So $|\nu|=n-2i$ and hence there are $\binom{n-2i}{2}$ ways to choose two $\hat 1$'s.  Finally one sums over all possible $i$.

\medskip

(d) From part (a) we have
\begin{align}
\frac{d}{d q} \ln \left(\sum_{n\ge0} a_n\,q^n \right) 
& = \frac{d}{d q} \ln \left( \frac{q^2}{(1-q)^2} \prod_{i\geq 0} \frac{1}{1-q^{2^i}} \right) \notag \\
& = \frac{d}{d q} \left(\ln\frac{q^2}{(1-q)^2} +  \sum_{i\geq 0} \ln\left( \frac{1}{1-q^{2^i}} \right)\right) \notag \\
& = \frac{2}{q(1-q)} + \sum_{i\geq 0} \frac{2^i\,q^{2^i}}{q(1-q^{2^i})} \notag \\
& = \frac{1}{q}\sum_{n\geq 0} 2\,q^n +\frac{1}{q} \sum_{n\geq 1}  \left(\sum_{2^j|n} 2^j\right)\, q^n \notag \\
& = \frac{1}{q} \left(2+\sum_{n\geq 1} \left(2+\be(n)\right)\, q^n\right).\label{eqdiv}
  \end{align}
From this we deduce
\begin{align*}
    \sum_{n\geq 0} n\,a_n\,q^n &= \left( \sum_{n\geq 0} a_n\,q^n\right)  \left(2+\sum_{n\geq 1} \left(2+\be(n)\right)\, q^n\right).
\end{align*}
Taking the coefficient of $q^n$ finishes the proof.

\medskip

(e)  The generating function in (a) is the same as the one given for A131205 multiplied by $q$.

\medskip

(f)  Again, we give two proofs.  For the algebraic one, let
$$
B(q) = \sum_{n\ge0} |\cB(n)|\ q^n = \prod_{i\ge0} \frac{1}{{1-q^{2^i}}}
$$
and, using part (a), define $F(q)$ to be
$$
F(q) = \sum_{n\ge0} a_n\, q^n = \frac{q^2}{(1-q)^2} B(q).
$$
Then
$$
\sum_{n\ge0} \De a_n\, q^n = \frac{F(q)}{q} - F(q) = \frac{q}{1-q} B(q)
$$
and
\begin{align*}
\sum_{n\ge 0} |\cB(2n)|\, q^{2n}
&= \frac{1}{2}\left( B(q)  + B(-q)\right)\\
&= \frac{1}{2}\left(\frac{1}{1-q} \prod_{i\ge1} \frac{1}{{1-q^{2^i}}}
+\frac{1}{1+q} \prod_{i\ge1} \frac{1}{{1-q^{2^i}}}\right)\\
&= \frac{1}{1-q^2}\prod_{i\ge1} \frac{1}{{1-q^{2^i}}}.
\end{align*}
Substituting $q^{1/2}$ into the generating function just given and comparing it to the one for $\De a_n$ completes the proof.

We also give a combinatorial proof. 
Each $\mu\in\cB(n+1)$ having a pair of ones counted by $a_{n+1}$
can be obtained uniquely from some $\la\in\cB(n)$ as $\mu=\la\oplus (1)$. 
So $\De a_n=a_{n+1}-a_n$ is just the number of new pairs of $1$'s obtained by adding the final $1$. 
The new $1$ creates $m_1(\la)$ new pairs.  So 
\beq
\label{Da=BR}
\De a_n = m_1(\cB(n))=|\cBR_1(n)|,
\eeq
since counting a $1$ is equivalent to rooting it.  Thus it suffices to show that there is a bijection $f:\cBR_1(n)\ra\cB(2n-2)$.  An example illustrating the construction of $f$ follows the proof of this theorem.

Given $\la\in\cBR_1(n)$ we write
$$
\la = \mu \oplus \nu
$$
where $\nu$ contains the $\hat{1}$ and all the $1$'s after it.  If $|\nu|=j$ then we let
$$
f(\la) = 2\mu \oplus (1^{2j-2}).
$$
It is easy to check that $f$ is well-defined in that $f(\la)\in\cB(2n-2)$.  To show that $f$ is bijective, we construct its inverse.  Given $\la'\in\cB(2n-2)$ we write
$$
\la' = \mu' \oplus \nu'
$$
where $\nu'$ contains all the $1$'s in $\la'$.  Now let
$$
f^{-1}(\la') = \mu'/2 \oplus (\hat{1},1^{k/2})
$$
where $k=|\nu'|$.  Again, the verification that $f^{-1}$ is well defined and indeed the inverse to $f$ is straightforward and so left to the reader.

\medskip
%\mcom{There is another proof:
%\begin{align*}
%    \sum_{n\geq 0} a_n\,q^n &= q^2 \frac{1}{1-q} \frac{1}{1-q} \sum_{n\geq 0} |\cB(n)|\,q^n\\
%     &= q^2 \frac{1}{1-q} \sum_{n\geq 0} |\cB(2n)|\,q^n \\
%     &= q^2 \sum_{n\geq 0} \sum_{k=0}^n |\cB(2k)| \,q^n.
%\end{align*}
%Thus, we deduce that
%$$a_{n+2}=\sum_{k=0}^n |\cB(2k)|.$$
%}

(g)  It is not hard to prove this algebraically using part (a), but we will give a combinatorial demonstration.  By equations~\eqref{a=BR} and~\eqref{Da=BR}, it suffices to find a function
$f:\cBR_1(n)\rightarrow R$ where
$$
R = \begin{cases}
    \cBR_{1,1}\left(\frac{n+1}{2}\right)\uplus \cBR_{1,1}\left(\frac{n+3}{2}\right) 
    & \text{if $n$ is odd},\\[5pt]
    \cBR_{1,1}\left(\frac{n+2}{2}\right)
    & \text{if $n$ is even},
\end{cases}
$$
where $f$ is bijective if $n$ is odd, $2$-to-$1$ if $n$ is even, and $\uplus$ is disjoint union.
To define $f$, suppose $\la\in\cBR_1(n)$ and write
$$
\la = \mu\oplus\nu
$$
where $\nu$ contains all the $1$'s and the $\hat{1}$ of $\la$.  Let $i$ and $j$ be the number of $1$'s before and after the $\hat{1}$, respectively.  Let
$$
f(\la) = \mu/2 \oplus (\hat{1}) \oplus (1^{\fl{i/2}}) \oplus (\hat{1}) \oplus (1^{\fl{j/2}}).
$$
Again, $f$ being well-defined is a straightforward, if case-heavy, calculation based on the parities of $i$ and $j$.

To show that $f$ is bijective when $n$ is odd, we construct its inverse.  Write
\beq
\label{la'}
\la' =\mu' \oplus (\hat{1}) \oplus (1^k) \oplus (\hat{1}) \oplus (1^l).
\eeq
for some nonnegative integers $k,l$.  
If 
$\la'\in\cBR_{1,1}\left(\frac{n+1}{2}\right)$ 
then let
$$
f^{-1}(\la') = 2\mu' \oplus (1^{2k+1}) \oplus (\hat{1}) \oplus (1^{2l+1}).
$$
On the other hand, if 
$\la'\in\cBR_{1,1}\left(\frac{n+3}{2}\right)$ 
then let
$$
f^{-1}(\la') = 2\mu' \oplus (1^{2k}) \oplus (\hat{1}) \oplus (1^{2l}).
$$
Again, we omit the tedious details that this is well defined and an inverse.

Finally, we must show that $f$ is $2$-to-$1$ when $n$ is even.  We keep the notation of the previous paragraph for $\la'$ and, in particular, the decomposition~\eqref{la'}.  Then 
$$
f^{-1}(\la') = 
\{
2\mu'\oplus (1^{2k}) \oplus (\hat{1}) \oplus (1^{2l+1}),\quad
2\mu'\oplus (1^{2k+1}) \oplus (\hat{1}) \oplus (1^{2l})
\}.
$$
As usual, we leave the verification to the reader.
\eprf

We now illustrate the bijection $f$ in part (f) of the previous proof.  

\begin{example}Suppose that
$$
\la = (4,2,2,1,\hat{1},1,1) = (4,2,2,1)\oplus(\hat{1},1,1)
$$
so that
$$
\mu = (4,2,2,1) \text{ and } \nu = (\hat{1},1,1).
$$
Now $j=|\nu|=3$ so that $2j-2 = 4$ so that 
$$
f(\la) = 2(4,2,2,1)\oplus(1^4) = (8,4,4,2,1,1,1,1).
$$
For the inverse, consider
$$
\la'= (8,4,4,2,1,1,1,1) = (8,4,4,2) \oplus (1,1,1,1)
$$
which implies
$$
\mu' = (8,4,4,2) \text{ and } \nu'=(1,1,1,1).
$$
This means that $k=|\nu'|=4$ and $k/2=2$, so
$$
f^{-1}(\la') = (8,4,4,2)/2 \oplus (\hat{1},1^2) = (4,2,2,1,\hat{1},1,1)
$$
recovering the original partition $\la$.
\end{example}

We now consider $2$'s in $\ImB_2(n)$. 
\bth \label{m2B}
For $n\ge0$, let
$$
b_n = m_2(\ImB_2(n)).
$$
\ben
\item[(a)] We have
$$
\sum_{n\ge0} b_n\, q^n = \frac{q^3}{(1-q)(1-q^2)}\ \prod_{i\ge0} \frac{1}{{1-q^{2^i}}}.
$$
\item[(b)] For $n\ge0$,
$$
b_n = \sum_{i\ge1} \left\lfloor \frac{i-1}{2} \right\rfloor |\cB(n-i)|.
$$
\item[(c)] For $n\ge0$,
$$
b_n = \sum_{i=0}^{\lfloor n/2 \rfloor} \left\lfloor \frac{(n-1-2i)^2}{4} \right\rfloor |\cB(i)|.
$$
\item[(d)] For $n\ge0$,
$$
(n-3)\,b_n = \sum_{k=1}^n \left(\beta(k)+(-1)^k+2 \right)b_{n-k}.
$$
\item[(e)]  Keeping the notation of Theorem~\ref{m1B}, for $n\ge0$, 
$$
\De b_n = a_{\fl{n/2}+1}.
$$
\item[(f)]  For $n\ge0$,
$$
a_n = b_n + b_{n+1}.
$$
\een
\eth
\bprf
(a)  Using Lemma~\ref{2^k:ImB} with $k=1$ gives
\beq
\label{b=BR}
b_n=|\cBR_{1,2}(n)|.
\eeq
If the $1$ is rooted at position $j$ and the $2$ at position $k$ then
$$
\sum_{n\geq 0}b_n\,q^n
=\sum_{j\geq 1}q^j\ \sum_{k\ge1} q^{2k}\ \prod_{i\geq 0}\frac{1}{1-q^{2^i}}
= \frac{q}{1-q}\ \cdot  \frac{q^2}{1-q^2}\ \cdot \prod_{i\ge0} \frac{1}{{1-q^{2^i}}},
$$
which is what we wished to prove.

\medskip

(b)  One can give both algebraic and combinatorial proofs similar to those of Theorem~\ref{m1B} (b).  For the former we equate coefficients in (a).  For the latter, we use~\eqref{b=BR} and write $\la\in\cBR_{1,2}(n)$ as $\la =\mu\oplus\nu$ where $\nu$ consists of the $\hat{1}$ and all $1$'s to its right as well as the $\hat{2}$ and all $2$'s to its right.  The reader can easily supply the details.

\medskip

(c)  Again, the generating function proof parallels that of Theorem~\ref{a=BR} (c).
The combinatorial proof is also similar, writing  $\la\in\cBR_{1,2}(n)$ as $\la =\mu\oplus\nu$ where $|\mu|= 2i$  and $\nu$ contains  all $1$'s and the $\hat{1}$ as well as the $\hat{2}$ and all $2$'s to its right. 

\medskip

%The generating function of $b_n$ can be written as:
%\begin{align*}
%    \sum_{n\geq 0} b_n\,q^n &= \frac{q^3}{(1-q)^2\,(1-q^2)} \prod_{n\geq 1} \frac{1}{1-q^{2^n}} \\
%    &= \left( \sum_{n\geq 0} \left\lfloor \frac{(n-1)^2}{4}  \right\rfloor q^n \right) \left(  %\sum_{n\geq 0} |\cB(n)|\,q^{2n} \right).
%\end{align*}
%Taking the coefficient of $q^n$ finishes the proof.

(d)  As in the demonstration of Theorem~\ref{a=BR} (d), one can obtain this result using logarithmic differentiation.

%Similar to the proof of Theorem~\ref{m1B} (e), considering that
%\begin{align}
%\frac{\partial}{\partial q} \ln \left(\sum_{n=0}^\infty b_n\,q^n \right) 
%& = \frac{\partial}{\partial q} \ln \left( \frac{q^3}{(1-q)(1-q^2)} \prod_{n\geq 0} \frac{1}{1-q^{2^n}} \right) \notag \\
%& = \frac{\partial}{\partial q} \ln\left(\frac{q}{1-q}\right) + \frac{\partial}{\partial q} \ln\left(\frac{q^2}{1-q^2}\right) +  \sum_{n\geq 0} \ln\left( \frac{1}{1-q^{2^n}} \right) \notag \\
%& = \frac{1}{q(1-q)} + \frac{2}{q(1-q^2)} + \sum_{n\geq 0} \frac{2^n\,q^{2^n}}{q(1-q^{2^n})} \notag \\
%& = \frac{1}{q}\sum_{n\geq 0} q^n + \frac{1}{q}\sum_{n\geq 0} 2\,q^{2n} +\frac{1}{q} \sum_{n\geq 1} q^n \sum_{2^i|n} 2^i \notag \\
%& = \frac{1}{q} \left(3+\sum_{n\geq 1} \big((-1)^n+2+\beta(n)\big) q^n \right),\label{eqdiv}
%  \end{align}
%we deduce
%\begin{align*}
%    \sum_{n\geq 0} n\,b_n\,q^n &= \left( \sum_{n\geq 0} b_n\,q^n\right)  \left(3+\sum_{n\geq 1} q^n \Bigg(\beta(n)+(-1)^n+2\Bigg)\right).
%\end{align*}
%Taking the coefficient of $q^n$ finishes the proof.

\medskip

(e)   We know the generating functions for $b_n$ and $a_n$ from parts (a) of this theorem and the previous one, respectively.  So it is  easy to compute the corresponding series for $b_{n+1}-b_n$ and $a_{\fl{n/2}+1}$ and check that they are the same.  

For the combinatorial proof, first note that
\beq
\label{Db=BR}
\Delta b_n = |\cBR_2(n)|,
\eeq
where the proof is similar to that of equation~\eqref{Da=BR}.  So, using~\eqref{a=BR}, it suffices to find a bijection $f:\cBR_2(n)\rightarrow\cBR_{1,1}(\fl{n/2}+1)$.  Take $\la\in\cBR_2(n)$ and write
$$
\la = \mu\oplus\nu,
$$
where $\nu$ contains all the $1$'s in $\la$ and suppose $|\nu|=i$.  Now let
$$
f(\la) = \mu/2 \oplus (\hat{1})\oplus (1^{\fl{i/2}})
$$
where the $\hat{2}$ in $\mu$ becomes a $\hat{1}$ in $\mu/2$.  The reader should now be able to fill in the details of the rest of the proof.

  \medskip

(f) This follows easily by comparing the generating functions for $a_n$ and $b_n$.  But we prefer a combinatorial proof. 
Using the usual translation into rooted partitions, it suffices to create a bijection
$$
f:\cBR_{1,1}(n) \rightarrow \cBR_{1,2}(n) \uplus \cBR_{1,2}(n+1).
$$ 
Suppose $\la\in\cBR_{1,1}(n)$ and write
$$
\la=\mu\oplus\nu\oplus\pi
$$
where $\nu$ is the first $\hat{1}$ together with all the $1$'s between it and the second, while $\pi$ is the second $\hat{1}$ and all the ones afterward.  Let $k=|\nu|$ and let
$\nu'$ to be $\ce{k/2}$ copies of $2$ with the first part rooted.  Note
that 
$$
|\nu'|=\begin{cases}
 k & \text{if $k$ is even}, \\
 k+1 & \text{if $k$ is odd}.
\end{cases}
$$
Now let
$$
f(\la) = \mu\oplus\nu'\oplus\pi.
$$
It is easy to check that $f$ is well defined and to construct its inverse.  The reader can fill in the details.
\eprf

It should now be clear to the reader that we can use the techniques we have developed to write down formulas for the number of parts equal to $2^k$ in 
$\ImB_2(n)$ for any $k$, although the expressions will be more and more complicated as $k$ increases. 
To give a sense of the complexity, we will 
now consider, $c_n$, the number of parts equal to $4$.   But we will content ourselves with the generating function
and a formula for $\De c_n$.
We will use the Kronecker delta which, for any statement $S$, is given by
$$
\de(S)
=\begin{cases}
    1 & \text{if $S$ is true},\\
    0 & \text{if $S$ is false}.
\end{cases}
$$
\bth \label{m4B}
For $n\ge0$, let
$$
c_n = m_4(\ImB_2(n)).
$$
\ben
\item[(a)]  We have
$$
\sum_{n\geq 0}c_n\,q^n=
\frac{q^4(1+q+2q^2)}{(1-q^2)(1-q^4)}\,\prod_{i\ge0} \frac{1}{{1-q^{2^i}}}.
$$
\item[(b)] For $n\ge0$,
$$
c_n=\sum_{i\geq 2} \left(i-1 \right)\, |\cB(n-2i)|
+\sum_{i\geq 4} \left\lceil \frac{i-4}{4} \right\rceil\, |\cB(n-i)|.
$$
\item[(c)] For $n\ge0$,
$$
\De c_n =|\cBR_4(n)|+\de(\text{$n$ odd})\, |\cBR_2(n)|.
$$
\een
\eth
\bprf 
(a) The $k=2$ case of Lemma~\ref{2^k:ImB} gives
$$
c_n = |\cBR_{1,4}(n)| + |\cBR_{2,2}(n)|,
$$
and then we proceed as in parts (a) of the previous two theorems.

\medskip

(b) We can rewrite the factor in front of the product in (a) as
\begin{align*}
    \frac{q^4(1+q+2q^2)}{(1-q^2)(1-q^4)}
    &= \frac{q^4(1+q^2)}{(1-q^2)(1-q^4)} + \frac{q^4(q+q^2)}{(1-q^2)(1-q^4)}\\
    &= \frac{q^4}{(1-q^2)^2} + \frac{q^5}{(1-q)(1-q^4)}\\
    &= \sum_{n\geq 2} (n-1)\,q^{2n} + \sum_{n\geq 4} \left\lceil \frac{i-4}{4} \right\rceil\, q^n.
\end{align*}
Multiplying by $\prod_{i\ge0} 1/(1-q^{2^i})$, using part (a), and extracting the coefficient of $q^n$ finishes the proof.

(c)  The parts equal to $4$  in $\ImB_2(n)$ come from products in $\cB_2(n)$ of either a $1$ with a $4$, or a $2$ with a $2$.  The number of new products of the former type  counted by $\De c_n$ is $|\cBR_4(n)|$, analogous to equations~\eqref{Da=BR} and~\eqref{Db=BR}.  If $n$ is even then the map 
$f:\cB(n)\ra\cB(n+1)$ defined by
$f(\la)=\la\oplus(1)$ is a bijection and so there are no new products of $2$ and $2$.  Thus the proof is complete in this case.

If $n$ is odd then, since every $\la\in\cB(n)$ contains a $1$, we can define a map $g:\cB(n)\ra\cB(n+1)$ by writing $\la=\mu\oplus(1)$ and letting $g(\la)=\mu\oplus(2)$.
This will be a bijection onto the partitions in $\cB(n+1)$ with at least one $2$. These are the only $2$'s which can contribute a $2\cdot 2$ to $\De c_n$.  Furthermore, the number of such products which are created is just the number of $2$'s in $\la$.  This accounts for the $|\cBR_2(n)|$ term in the sum and we are finished.
\eprf

\section{$d$-ary partitions}
\label{dp}

Consider an integer $d\ge2$.  Say that $\la$ is a {\em $d$-ary partition} if all its parts are powers of $d$. Define
$\cP(n,d)$ to be the set of $d$-ary partitions of size $n$ and
$$
\cP_k(n,d) = \{\la\in\cP(n,d) \mid \ell(\la)\ge k\}.
$$
Also, set
$$
\ImP_k(n,d) = \pre_k (\cP_k(n,d)).
$$
Obviously when $d=2$ we recover the binary partitions.
In this section we will study  $\ImP_2(n,d)$.  

First we show that $\pre_2$ is injective on $\cP_k(n,d)$.  We begin with a lemma which is a generalization of Lemma~\ref{2^k:ImB} to all partitions.
\ble
Suppose that $\pre_2(\la)=\mu$.  Then for any $k$ we have
\begin{equation}
\label{m:rec}
 m_k(\mu) = \sum_{\substack{e f= k\\ e<f}} m_e(\la) m_{f}(\la) + \de(\text{$k$ is a square})\binom{m_{\sqrt{k}}(\la)}{2}.
\end{equation}
\ele
\bprf
We get a copy of $k$ in $\mu$ for each pair $e,f$ of parts in $\la$ such that $e f=k$.
To get each pair exactly once we assume $e\le f$.  If $e<f$ then this gives one of the terms of the sum.  If $e=f$ then $k=e^2$ and the number of ways to pick two $e$'s from $\la$ is the binomial coefficient.
\eprf

\bth
For any $n, d\ge2$, the map $\pre_2$ is injective on $\cP_k(n,d)$.
\eth
\bprf
Suppose that $\pre_2(\la)=\mu$.  It suffices to show that the multiplicities in $\mu$ determine those in $\la$ uniquely.  For ease of notation, let
Let 
$$
m = m_1(\mu)=\binom{m_1(\la)}{2}.
$$
We have two cases depending on whether $m$ is nonzero.

If $m>0$ then the previous displayed equation 
uniquely determines $m_1(\la)$ as a nonzero integer.
To determine $m_k(\la)$ note that, by induction on $k$, we can assume that every $m_j(\la)$ in equation~\eqref{m:rec} with $j<k$ has been determined.  Furthermore, $m_k(\la)$ only appears in the term $m_1(\la)m_k(\la)$ 
of this equation  and its coefficient is nonzero.  Thus equation~\eqref{m:rec} is linear in $m_k(\la)$ and so has a unique solution as desired.

If $m=0$ then $m_1(\la)=0$ or $1$.  Since $\la$ is $d$-ary we have that $n=|\la|$ satisfies $n\equiv m_1(\la)\ (\Mod d)$.  So we can tell the value
of $m_1(\la)$ from $n$.  If $m_1(\la)=1$, then the same argument as in the previous paragraph shows that $\la$ is determined.
If $m_1(\la)=0$ then consider the partition $\la/d$ obtained by dividing every part of $\la$ by $d$.  So $\la/d$ is a $d$-ary partition of $n/d$.  And, by induction on $n$, we can assume that $\la/d$ can be reconstructed from $\pre_2(\la/d) = \mu/d^2$.  Finally, multiplying every part of $\la/d$ by $d$ gives $\la$.
\eprf

Note that the same proof just given can be used to show that $\pre_2$ is injective on the set of partitions of $n$ with at least two $1$'s.
We can now generalize some of the results of the previous section on binary partitions to $d$-ary ones.  Since the proofs follow the same lines as when $d=2$, they will be omitted.  We will need a generalization of the $\be$ function defined above, namely
$$
\be(n,d) = \sum_{d^i | n} d^i.
$$
\bth \label{dary}
For $n\ge0$, let
\begin{align*}
  a_n(d) & =m_1(\ImP_2(n,d)),\\ 
  b_n(d) & =m_d(\ImP_2(n,d)).
\end{align*} 
\ben
\item[(a)] We have
\begin{align*}
   \sum_{n\ge0} a_n(d)\, q^n 
   &=  \frac{q^2}{(1-q)^2}\ \prod_{i\ge0} \frac{1}{{1-q^{d^i}}},\\
   \sum_{n\ge0} b_n(d)\, q^n 
   &=  \frac{q^{d+1}}{(1-q)(1-q^d)}\ \prod_{i\ge0} \frac{1}{{1-q^{d^i}}}.  
\end{align*}
\item[(b)]  For $n\ge0$,
\begin{align*}
   a_n(d) &= \sum_{i\ge 1} (i-1)\ |\cP(n-i,d)|,\\
   b_n(d) &= \sum_{i\ge 1} \flf{i-1}{d}\ |\cP(n-i,d)|.
\end{align*}

\item[(c)]  For $n\ge0$,
\begin{align*}
 a_n(d) &= \sum_{i=1}^{\lfloor n/d \rfloor} \binom{n-d\,i}{2}\ |\cP(i,d)|,\\  
 b_n(d) &= \sum_{i\ge1} \left(i\flf{i-1}{d} - d \binom{\lfloor (i-1)/d \rfloor+1}{2} \right)\ |\cP(n-i,d)|,\\  
\end{align*}

\item[(d)] For $n\ge0$,
\begin{align*}
   (n-2)\,a_n(d)&=\sum_{k=1}^n ( \be(k,d)+2)\,a_{n-k},\\ 
(n-d-1)\,b_n(d)&=\sum_{k=1}^n [\be(k,d)+1+\de(d|(n-1))\cdot d]\,a_{n-k}.
\end{align*}

\item[(e)]  For $n\ge0$,
\begin{align*}
 \De a_n(d) &= |\cP(dn-d,d)|,\\ 
 \rule{150pt}{0pt}
 \De b_n(d) &= a_{\fl{n/d}} + 1.
  \rule{150pt}{0pt}\qed
\end{align*}
\een
\eth

\section{Color partitions}
\label{cp}

Partitions with $n$ copies of $n$, also referred to in the literature as $n$-color partitions, are partitions in which the part of size $n$ appears in $n$ different colors: $n_1, n_2, \ldots, n_n$. The parts of an $n$ color partition are ordered lexicographically.
Implicitly, $n$-color partitions appear in work of MacMahon \cite[Chapters 11-12]{mm}.
 As noted by Andrews and Paule~\cite{ap:ncn}, $n$-color partitions  were also used implicitly in Regime III of the hard
hexagon model, see the paper of Andrews, Baxter, and Forrester~\cite{hhm}.  They were first studied explicitly by Agarwal and Andrews~\cite{aa:ncn} and have since been considered by a number of authors.

Here, we consider color binary partitions in which a part of size $2^n$ can come in $n+3$ different
colors denoted by subscripts: $2^n_1$, $2^n_2$, $\ldots$, $2^n_{n+3}$. Totally order the parts by
$$
1_1<1_2<1_3<2_1<2_2<2_3<2_4<4_1<4_2<4_3<4_4<4_5<\cdots\, .
$$
We call such partitions {\em $(n+3)$-color binary partitions},
and let
$\cQ(m)$ be the set of such partitions of $m$ into distinct parts.
 For example, 
   \begin{align*}
    \cQ(4) =  \{ &(4_5),\ (4_4),\ (4_3),\ (4_2),\ (4_1),\ (2_4,2_3),\ (2_4,2_2),\ (2_4,2_1),\ (2_4,1_3,1_2),\ (2_4,1_3,1_1),\\
       & (2_4,1_2,1_1),\ (2_3,2_2),\  (2_3,2_1),\ (2_3,1_3,1_2),\ (2_3,1_3,1_1),\ (2_3,1_2,1_1),\ (2_2,2_1),\\
       & (2_2,1_3,1_2),\ (2_2,1_3,1_1),\ (2_2,1_2,1_1),\ (2_1,1_3,1_2),\ (2_1,1_3,1_1),\ (2_1,1_2,1_1)\}.
   \end{align*}

We can now give  new identities for the sequence $a_n$ appearing in  Theorem~\ref{m1B}. 
We also obtain an expression for the $\be(n)$ function defined by~\eqref{be(n)}.
\begin{theorem}
\label{th8}
For $n\ge0$, let
$$
Q_n =|\cQ(n)|.
$$
\ben
\item[(a)]  We have
$$
\sum_{n\geq 0} a_n\,q^n  = q^2\,\prod_{i\geq 0} (1+q^{2^i})^{i+3}.
$$
\item[(b)] For $n\geq 0$, 
$$
a_{n+2}=Q_n.
$$
\item[(c)] For $n\ge0$,
$$
a_{n+2}= \sum_{\la\in\cB(n)}\, \prod_{i\ge0} \binom{i+3}{m_{2^i}(\la)}.
$$
\een
\end{theorem}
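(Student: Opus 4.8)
The plan is to derive all three parts from the generating function already established in Theorem~\ref{m1B}(a), together with the classical binary factorization identity. First I would recall that, for each $i\ge0$,
$$
\frac{1}{1-q^{2^i}} = \prod_{j\ge i}(1+q^{2^j}),
$$
which is the uniqueness of binary expansions after the substitution $q\mapsto q^{2^i}$. Multiplying over all $i\ge0$, the factor $1+q^{2^j}$ is contributed once for each $i\in\{0,1,\dots,j\}$, so
$$
\prod_{i\ge0}\frac{1}{1-q^{2^i}} = \prod_{j\ge0}(1+q^{2^j})^{j+1},
\qquad\text{and likewise}\qquad
\frac{1}{(1-q)^2} = \prod_{j\ge0}(1+q^{2^j})^{2}.
$$
Substituting both into Theorem~\ref{m1B}(a) gives
$$
\sum_{n\ge0} a_n q^n = \frac{q^2}{(1-q)^2}\prod_{i\ge0}\frac{1}{1-q^{2^i}} = q^2\prod_{j\ge0}(1+q^{2^j})^{j+3},
$$
which is part (a).

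For part (b), I would observe that the generating function for $Q_m=|\cQ(m)|$ is exactly $\prod_{i\ge0}(1+q^{2^i})^{i+3}$: a part of size $2^i$ is available in $i+3$ colors, each colored part is used at most once (this is what ``distinct parts'' means in $\cQ(m)$, as the example $\cQ(4)$ confirms), and the fixed total order on the parts makes the partition unique once the multiset of colored parts is chosen. Hence $\sum_{m\ge0}Q_m q^m = \prod_{i\ge0}(1+q^{2^i})^{i+3}$, and comparison with part (a) yields $\sum_{n\ge0}a_n q^n = q^2\sum_{m\ge0}Q_m q^m$, i.e.\ $a_{n+2}=Q_n$.

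For part (c), I would extract the coefficient of $q^n$ in $\prod_{i\ge0}(1+q^{2^i})^{i+3}$ directly: a monomial $q^n$ arises by selecting, for each $i$, a number $k_i$ with $0\le k_i\le i+3$ of the $i+3$ copies of $(1+q^{2^i})$ to contribute $q^{2^i}$, subject to $\sum_i k_i2^i=n$, and the number of such selections is $\prod_i\binom{i+3}{k_i}$; the data $(k_i)$ corresponds bijectively to $\la\in\cB(n)$ via $k_i=m_{2^i}(\la)$. Thus the coefficient of $q^n$ equals $\sum_{\la\in\cB(n)}\prod_{i\ge0}\binom{i+3}{m_{2^i}(\la)}$, and by part (a) it equals $a_{n+2}$; equivalently, combined with (b), this counts the colorings of each $\la\in\cB(n)$ producing an element of $\cQ(n)$. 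All of this is routine; the only points needing care are the rearrangement of the infinite products (valid as an identity of formal power series, since only finitely many factors affect any fixed coefficient) and the observation about what ``distinct parts'' means for $\cQ$, and I do not anticipate a genuine obstacle.
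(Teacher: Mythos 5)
Your proposal is correct and follows essentially the same route as the paper: part (a) via the binary-expansion identity $\frac{1}{1-q^{2^i}}=\prod_{j\ge i}(1+q^{2^j})$ and a count of how often each factor occurs, part (b) by recognizing $\prod_{i\ge0}(1+q^{2^i})^{i+3}$ as the generating function for $\cQ(n)$ (distinct colored parts), and part (c) by binomial expansion and coefficient extraction. The only difference is that the paper additionally supplies a combinatorial proof of (b) via an explicit bijection $\cBR_{1,1}(n+2)\to\cQ(n)$, which your algebraic argument does not need.
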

\begin{proof}
(a)
Since every integer has a unique base $2$ expansion, we have
$$
\frac{1}{1-q} = \prod_{i\geq 0} (1+q^{2^i}).
$$
Replacing $q$ with $q^{2^n}$ in the previous equation gives
$$  
\frac{1}{1-q^{2^n}} = \prod_{i\geq 0} (1+q^{2^{i+n}}).
$$ 
Using Theorem~\ref{m1B} (a) and the two previous equations   gives
   \begin{align*}
       \sum_{n\geq 0} a_n\,q^n & = q^2\, \frac{1}{(1-q)^2} \prod_{n\geq 0} \frac{1}{1-q^{2^n}} \\
       & = q^2\, \prod_{i\geq 0} (1+q^{2^i})^2\, \prod_{n\geq 0} \prod_{i\geq 0} (1+q^{2^{i+n}}) \\
        & = q^2\, \prod_{i\geq 0} (1+q^{2^i})^2\, \prod_{i\geq 0} (1+q^{2^{i}})^{i+1} \\
        & = q^2\,\prod_{i\geq 0} (1+q^{2^i})^{i+3}.
   \end{align*}

\medskip

(b)  We give both an algebraic and a combinatorial proof.  For the former,
 elementary techniques in the theory of partitions as in Andrews' text~\cite{and:ttp} give the following generating function
\beq
\label{Q_n:prod}
\sum_{n\geq 0} Q_n\,q^n = \prod_{i\geq 0} (1+q^{2^i})^{i+3}.
\eeq
Substituting this for the product in part (a) and equating coefficients of $q^{n+2}$ finishes the algebraic demonstration.

We now give a proof, using~\eqref{a=BR}, by constructing a bijection $f:\cBR_{1,1}(n+2)\rightarrow\cQ(n)$.  See Example~\ref{BQ:ex} for an illustration of the map.  Write $\la\in\cBR_{1,1}(n+2)$ as
\beq
\label{lmnp}
\la = \mu \oplus (\hat{1}) \oplus \nu \oplus (\hat{1}) \oplus \pi,
\eeq
where $\nu$ contains all the $1$'s between the first and second $\hat{1}$, and $\pi$ contains all the $1$'s after the second.  We form a color partition $\mu'$ from $\mu$ as follows.  For each $t\ge0$ suppose $2^t$ occurs in $\mu$ with multiplicity $s$ ($s$ depending on $t$).  Write $s\cdot2^t$ in binary notation and color each power of $2$ with $t+3$, adding these elements to $\mu'$  for each value of $t$.  From $\nu$ we create $\nu'$ by writing $|\nu|$ in binary and subscripting the summands with $2$.  Similarly form $\pi'$ from $\pi$ only using subscripts $1$.  Finally, let
\beq
\label{flmnp}
f(\la) = \mu'\oplus \nu'\oplus \pi'.
\eeq
It is easy to check that this map is a well-defined bijection.
We note the similarity of this bijection  to the usual one  used to prove that the number of partitions of $n$ into odd parts equals the number into distinct parts.

\medskip

(c)  Example~\ref{a:m} illustrates this formula.
    By employing the binomial expansion, the generating function~\eqref{Q_n:prod} can be rewritten as
$$
        \sum_{n\geq 0} Q_n\,q^n = \prod_{n\geq 0} \sum_{j\ge0} \binom{n+3}{j} q^{j\cdot2^n}.
$$
Collecting all the terms in the product contributing to a given power $q^n$ gives
$$
    \sum_{n\geq 0} Q_n\,q^n  
    = \sum_{n\geq 0} q^n \sum_{\la\in\cB(n)} \prod_{i\ge0} \binom{i+3}{m_{2^i}(\la)}.
$$
Combining this with part (b) finishes the proof.
\end{proof}

\begin{example}
\label{BQ:ex}
To illustrate the bijection for the proof of part (b) of the previous theorem, suppose
$$
\la= (2,2,2,1,1,\hat{1},1,\hat{1},1,1) = (2,2,2,1,1)\oplus(\hat{1})\oplus(1)\oplus(\hat{1})\oplus(1,1)
$$
so that
\begin{align*}
    \mu&=(2,2,2,1,1),\\
    \nu&=(1),\\
    \pi&=(1,1).
\end{align*}
In $\mu$ we have three copies of $2$ and two copies of $1$, so we write
\begin{align*}
3\cdot 2^1 &=4 + 2.\\
2\cdot 2^0 &= 2.
\end{align*}
Thus
$$
\mu' = (4_4, 2_4, 2_3).
$$
Similarly,
\begin{align*}
    |\nu|&= 1,\\
    |\pi|&=2,
\end{align*}
so that
\begin{align*}
    \nu'&= (1_2),\\
    \pi'&=(2_1).
\end{align*}
Finally
$$
f(\la) = (4_4, 2_4, 2_3) \oplus (1_2)\oplus (2_1) = (4_4, 2_4, 2_3, 2_1, 1_2).
$$
\end{example}

\begin{example}
\label{a:m}
The summation on the right-hand side of part (c) of the previous result encompasses all binary partitions of $n$, but not every term contributes because some are zero. 
For $m_{2^k}(\la)>k+3$, the binomial coefficient becomes zero,
allowing us to focus solely on binary partitions of $n$ in which each part $2^k$ has multiplicity at most $k+3$.
For example, the binary partitions of $4$ in which each part $2^k$ has the multiplicity at most $k+3$ are:
$$(4),\ (2,2),\ (2,1,1).$$
Thus
$$
    a_6 = \binom{2+3}{1} + \binom{1+3}{2} + \binom{1+3}{1} \binom{0+3}{2} 
         = 5 + 6 + 4\cdot 3 
         = 23.
$$
\end{example}

We can generalize the construction just given to give expressions for $2^d$-ary partitions.
Denote by $\cQ(m,2^d)$ the set of color binary partitions of $m$ into distinct parts in which a part of size $2^n$ can come in $\lfloor n/d \rfloor+3$ different colors.  As before, parts are ordered lexicographically.  For example, when $d=2$ then we are allowed parts
$$
1_1<1_2<1_3<2_1<2_2<2_3<4_1<4_2<4_3<4_4<8_1<8_2<8_3<8_4<\cdots\, .
$$
By way of example,
   \begin{align*}
    \cQ(4,2^2) =  \{ &(4_4),\ (4_3),\ (4_2),\ (4_1),\ (2_3,2_2),\  (2_3,2_1),\ (2_3,1_3,1_2),\ (2_3,1_3,1_1),\ (2_3,1_2,1_1),\\ 
       & (2_2,2_1),\ (2_2,1_3,1_2),\ (2_2,1_3,1_1),\ (2_2,1_2,1_1),\ (2_1,1_3,1_2),\ (2_1,1_3,1_1),\ (2_1,1_2,1_1)\}.
   \end{align*}
We have the following generalization of the previous theorem. Its proof is similar and so omitted.
\begin{theorem}
    Let $d$ be a positive integer. For $n\geq 0$, let
    $$
    Q_n(2^d) = |\cQ(n,2^d)|.
    $$
    \begin{enumerate}
    \item[(a)] We have
$$
\sum_{n\geq 0} a_n(2^d)\,q^n  = q^2\,\prod_{i\geq 0} (1+q^{2^i})^{\fl{i/d}+3}.
$$
    \item [(b)] For $n\ge0$,
    $$
     a_{n+2}(2^d)=Q_{n}(2^d).
    $$
    \item [(c)] For $n\ge0$,
    $$
    \hspace{120pt}
    a_{n+2}(2^d)= \sum_{\la\in\cB(n)} \prod_{i\ge0} \binom{\lfloor i/d \rfloor+3}{m_{2^i}(\la)}.
    \hspace{120pt}\qed
    $$
    \end{enumerate}
\end{theorem}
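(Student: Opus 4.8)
The plan is to run the three parts of the proof of Theorem~\ref{th8} essentially verbatim, the only change being that the ``color bookkeeping'' is now governed by the base $2^{d}$ coming from $2^{d}$-ary exponents rather than by the base $2$, while the underlying objects being counted remain ordinary binary partitions. For part (a), I would start from Theorem~\ref{dary} (a) applied with $d$ replaced by $2^{d}$, which gives $\sum_{n} a_{n}(2^{d})\,q^{n} = \frac{q^{2}}{(1-q)^{2}}\prod_{i\ge0}\frac{1}{1-q^{2^{di}}}$, and then apply the identity $\frac{1}{1-q} = \prod_{i\ge0}(1+q^{2^{i}})$ twice: once to rewrite $\frac{1}{(1-q)^{2}}$ as $\prod_{k\ge0}(1+q^{2^{k}})^{2}$, and once, after the substitution $q\mapsto q^{2^{di}}$, to rewrite $\frac{1}{1-q^{2^{di}}}$ as $\prod_{j\ge0}(1+q^{2^{di+j}})$. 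The one genuine step is the count: for a fixed $k$, the number of factors $\frac{1}{1-q^{2^{di}}}$ contributing a copy of $(1+q^{2^{k}})$ is the number of $i\ge0$ with $di\le k$, namely $\fl{k/d}+1$, and together with the two copies from $\frac{1}{(1-q)^{2}}$ this yields the exponent $\fl{k/d}+3$ in the claimed product.

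For part (b) I would give both proofs used for Theorem~\ref{th8} (b). Algebraically, elementary partition generating-function arguments give $\sum_{n} Q_{n}(2^{d})\,q^{n} = \prod_{i\ge0}(1+q^{2^{i}})^{\fl{i/d}+3}$, since an element of $\cQ(n,2^{d})$ is nothing but a choice of a subset of the available colored parts; comparing this with part (a) and extracting the coefficient of $q^{n+2}$ proves $a_{n+2}(2^{d}) = Q_{n}(2^{d})$. For the combinatorial proof, I would first note the analogue of~\eqref{a=BR}: since $1$ is the only part of a $2^{d}$-ary partition that divides $1$, every $1$ in $\pre_{2}(\la)$ comes from a pair of $1$'s in $\la$, so $a_{n}(2^{d})$ equals the number of rooted $2^{d}$-ary partitions of $n$ with exactly two roots, both equal to $\hat1$. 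Then I would construct a bijection from this set (with $n$ replaced by $n+2$) to $\cQ(n,2^{d})$ exactly as in the proof of Theorem~\ref{th8} (b): write $\la = \mu\oplus(\hat1)\oplus\nu\oplus(\hat1)\oplus\pi$ where $\nu,\pi$ are the blocks of $1$'s between and after the two roots; send $\nu$ and $\pi$ to the binary partitions of $|\nu|$ and $|\pi|$ with all parts colored $2$ and $1$ respectively; and send $\mu$ to $\mu'$ by, for each $t\ge0$, writing $m_{2^{dt}}(\mu)\cdot 2^{dt}$ in binary and coloring every resulting power of $2$ with color $t+3$.

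The step that needs real checking is that this map actually lands in $\cQ(n,2^{d})$ and is invertible, and I expect the divisibility point in the inverse to be the main (if modest) obstacle. Well-definedness: a power $2^{k}$ produced from $m_{2^{dt}}(\mu)\cdot 2^{dt}$ satisfies $k\ge dt$, hence $\fl{k/d}\ge t$, so the color $t+3$ is at most $\fl{k/d}+3$ and is permitted; distinctness within $\mu'$ holds because a single integer has distinct binary digits and two different exponents $t$ give different colors; and $\mu',\nu',\pi'$ are pairwise disjoint because $\mu'$ uses only colors $\ge 3$ while $\nu',\pi'$ use colors $2$ and $1$. For the inverse, the defining constraint of $\cQ(n,2^{d})$ forces a part $2^{k}$ of color $c\ge 3$ to satisfy $c\le\fl{k/d}+3$, i.e.\ $k\ge d(c-3)$; hence the sum $N_{c}$ of all parts of color $c$ is divisible by $2^{d(c-3)}$, and $N_{c}/2^{d(c-3)}$ recovers $m_{2^{d(c-3)}}(\mu)$, while the parts of colors $1$ and $2$ recover $|\pi|$ and $|\nu|$. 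Finally, part (c) is a routine transcription of the proof of Theorem~\ref{th8} (c): expand $\prod_{i\ge0}(1+q^{2^{i}})^{\fl{i/d}+3}$ factorwise by the binomial theorem as $\prod_{i\ge0}\sum_{j\ge0}\binom{\fl{i/d}+3}{j}q^{j2^{i}}$, collect the coefficient of $q^{n}$ as a sum over all binary partitions $\la$ of $n$ with $m_{2^{i}}(\la)=j_{i}$, and combine with part (b).
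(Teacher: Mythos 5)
Your proposal is correct and follows exactly the route the paper intends: the paper omits this proof as "similar" to that of Theorem~\ref{th8}, and you have carried out that adaptation faithfully, including the two details that actually require checking — the count of factors $(1+q^{2^k})$ coming from $\frac{1}{(1-q)^2}\prod_{i\ge0}(1-q^{2^{di}})^{-1}$ giving the exponent $\fl{k/d}+3$, and the divisibility of the color-$c$ weight by $2^{d(c-3)}$ needed to invert the bijection. Nothing further is required.
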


We can use color partitions to study $d$-ary partitions where $d$ is odd.
Consider color partitions into parts of the form $2^i\cdot(2d+1)^j$, in which a part of size $2^i$ can come in three different
colors and all other parts only have one color. 
As usual, parts are ordered lexicographically.
For example, when  $d=1$ we have parts
$$1_1<1_2<1_3<2_1<2_2<2_3<3_1<4_1<4_2<4_3<6_1<8_1<8_2<8_3<9_1<\cdots$$
We denote by $\cQ(m,2d+1)$ the set  of such partitions of $m$ into distinct parts.   To illustrate,
   \begin{align*}
       \cQ(4,3)=\{&(4_3),\ (4_2),\ (4_1),\ (3_1,1_3),\ (3_1,1_2),\ (3_1,1_1),\ (2_3,2_2),\ (2_3,2_1),\ (2_2,2_1)\\
       & (2_3,1_3,1_2),\ (2_3,1_3,1_1),\  (2_3,1_2,1_1),\ (2_2,1_3,1_2),\ (2_2,1_3,1_1),\  (2_2,1_2,1_1),\\
       & (2_1,1_3,1_2),\ (2_1,1_3,1_1),\  (2_1,1_2,1_1)\}.
   \end{align*}

\begin{theorem}
    Let $d\ge 1$ be an integer. For $n\geq 0$, let
    $$
    Q_n(2d+1) = |\cQ(n,2d+1)|.
    $$
    \begin{enumerate}
    \item[(a)] We have
$$
\sum_{n\geq 0} a_n(2d+1)\,q^n  = q^2\, \prod_{n\geq 0} (1+q^{2^n})^2 \, \prod_{i,j\geq 0} (1+q^{2^i\,(2d+1)^j}).
$$
    \item [(b)] For $n\ge0$,
    $$
     a_{n+2}(2d+1)=Q_{n}(2d+1).
    $$
    \end{enumerate}
\end{theorem}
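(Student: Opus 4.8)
The plan is to deduce part~(a) from Theorem~\ref{dary}(a) by rewriting the infinite products with the base-two expansion, and then to get part~(b) by comparing the resulting product with the generating function of $Q_n(2d+1)$; I would also point out that~(b) has a bijective proof modelled on that of Theorem~\ref{th8}(b).

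For part~(a), replace $d$ by $2d+1$ in Theorem~\ref{dary}(a) to get
$$
\sum_{n\ge 0} a_n(2d+1)\,q^n = \frac{q^2}{(1-q)^2}\,\prod_{j\ge 0}\frac{1}{1-q^{(2d+1)^j}}.
$$
Now use $\frac{1}{1-q}=\prod_{n\ge 0}(1+q^{2^n})$, so that $\frac{1}{(1-q)^2}=\prod_{n\ge 0}(1+q^{2^n})^2$, and substitute $q^{(2d+1)^j}$ for $q$ in the same identity to write $\frac{1}{1-q^{(2d+1)^j}}=\prod_{i\ge 0}(1+q^{2^i(2d+1)^j})$. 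Taking the product over $j\ge 0$ turns $\prod_{j\ge 0}\frac{1}{1-q^{(2d+1)^j}}$ into $\prod_{i,j\ge 0}(1+q^{2^i(2d+1)^j})$, which is exactly the claimed formula. Everything here is a manipulation of formal power series in which each power of $q$ is affected by only finitely many factors, so there is nothing to check about convergence.

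For part~(b), the first step is to record that
$$
\sum_{m\ge 0} Q_m(2d+1)\,q^m = \prod_{i\ge 0}(1+q^{2^i})^3\,\prod_{\substack{i\ge 0\\ j\ge 1}}(1+q^{2^i(2d+1)^j}),
$$
which follows from the standard generating-function bookkeeping for partitions into distinct parts (as in~\cite{and:ttp}): each power of two $2^i$ contributes the factor $(1+q^{2^i})$ once for each of its three colors, while each allowed part $2^i(2d+1)^j$ with $j\ge 1$ contributes $(1+q^{2^i(2d+1)^j})$ once. Comparing this with part~(a), one splits the product $\prod_{i,j\ge 0}(1+q^{2^i(2d+1)^j})$ into its $j=0$ portion $\prod_{i\ge 0}(1+q^{2^i})$ and its $j\ge 1$ portion; the former combines with $\prod_{n\ge 0}(1+q^{2^n})^2$ to give $\prod_{i\ge 0}(1+q^{2^i})^3$, so the right-hand side of~(a) equals $q^2\sum_{m\ge 0}Q_m(2d+1)q^m$. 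Extracting the coefficient of $q^{n+2}$ yields $a_{n+2}(2d+1)=Q_n(2d+1)$.

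I would also sketch the combinatorial proof of~(b), which parallels the argument for Theorem~\ref{th8}(b). By the analogue of equation~\eqref{m:rec} for $(2d+1)$-ary partitions together with injectivity of $\pre_2$ on them, $a_n(2d+1)$ equals the number of $(2d+1)$-ary partitions of $n$ carrying two roots $\hat{1},\hat{1}$, say $|\cPR_{1,1}(n,2d+1)|$. Writing such a $\la$ as $\mu\oplus(\hat{1})\oplus\nu\oplus(\hat{1})\oplus\pi$ with $\nu$ the ones between the roots and $\pi$ the ones after the second, one builds an element of $\cQ(n,2d+1)$ by: encoding the multiplicity of each $(2d+1)^t$ with $t\ge 1$ in $\mu$ via its binary expansion as distinct one-color parts $2^i(2d+1)^t$; encoding the multiplicity of $1$ in $\mu$ the same way but giving those powers of two the third color; and using the binary expansions of $|\nu|$ and $|\pi|$ to produce powers of two in the second and first colors. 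The part I expect to cost the most effort is checking that this is a well-defined bijection: one needs that no colored part is created twice and that the construction reverses unambiguously, and for this it is essential that $2d+1$ is odd, so that each allowed part $2^i(2d+1)^j$ factors uniquely into its $2$-part $2^i$ and odd part $(2d+1)^j$ --- this keeps the three color classes of powers of two disjoint and separates the $j\ge 1$ parts by the exponent $j$.
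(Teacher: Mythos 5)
Your argument is correct, but it reverses the paper's logical order. The paper proves part (b) first, via the bijection $\cPR_{1,1}(n+2,2d+1)\to\cQ(n,2d+1)$, and then obtains (a) by combining (b) with the distinct-parts generating function for $Q_n(2d+1)$; it never derives (a) directly from Theorem~\ref{dary}(a). You instead establish (a) on its own, by applying $1/(1-q)=\prod_{i\ge0}(1+q^{2^i})$ to each factor of the product in Theorem~\ref{dary}(a) (the same manipulation used for Theorem~\ref{th8}(a)), and then read off (b) by matching the $j=0$ factors of $\prod_{i,j\ge0}(1+q^{2^i(2d+1)^j})$ against the three color classes of powers of two. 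Your route makes (a) independent of the combinatorial construction and reduces (b) to a coefficient comparison, at the cost of leaning on Theorem~\ref{dary}(a), whose proof the paper omits; the paper's route keeps the combinatorial content primary. Your supplementary bijection agrees with the paper's up to a permutation of the three colors (the paper sends the $1$'s of $\mu$ to color $1$ and $\pi$ to color $3$; you do the reverse), and your remark that oddness of $2d+1$ is what keeps the color classes disjoint is precisely the point that makes the map well defined.
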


\begin{proof}
(a) From part (b), for which we will give an independent proof, it suffices to show that
   \begin{align*}
       \sum_{n\geq 0} Q_{n}(2d+1)\,q^n = \prod_{n\geq 0} (1+q^{2^n})^2 \, \prod_{i,j\geq 0} (1+q^{2^i\,(2d+1)^j}).
   \end{align*}
But this follows by considering which parts can appear in a color partition counted by $Q_n(2d+1)$.

 \medskip
 
 (b)
 We give a combinatorial argument similar to that for Theorem \ref{th8} (b). 
Let $\cPR_{1,1}(n,d)$ be the set of $d$-ary partitions of $n$ with two of its $1$'s rooted.
 Then we wish to construct a bijection $f:\cPR_{1,1}(n+2,2d+1)\rightarrow\cQ(n,2d+1)$.
 We decompose $\la\in\cPR_{1,1}(n+2,d)$ as a direct sum exactly as in~\eqref{lmnp}.
Suppose $(2d+1)^t$ occurs in the summand $\mu$ with multiplicity $s$.  Then for each power $2^r$ in the binary expansion of $s$, we put a part $2^r\cdot (2d+1)^t$ with color $1$ into a partition $\mu'$.
The summands $\nu$ and $\pi$ are replaced by binary partitions $\nu'$ and $\pi'$ with subscripts $2$ and $3$, respectively, in the same way as done in the proof of Theorem \ref{th8} (b).  Finally, we define $f(\la)$ by~\eqref{flmnp}.  As usual, the details of showing that this is a well-defined map and a bijection are left to the reader.
\eprf

\section{Conjectures and future work}

We end with some conjectures and a direction for future research in the hopes that the reader will be interested in pursuing them.  In a number of cases, the conjectures will follow easily if Conjecture~\ref{prek:con} can be proved.

In the previous two sections we dealt with $d$-ary partitions.  By contrast, a partition is {\em $d$-regular} if it contains no part which is a multiple of $d$.  For example, the $2$-regular partitions are the ones into odd parts.
Let
$$
r_{d,k}(n) =|\{\la \mid \text{$\la\in\ImP_k(n)$  is $d$-regular}\}|.
$$

\begin{conjecture}
The value of $r_{d,2}(n)-r_{d,3}(n)$ for $d=2$, $3$, $4$, and $5$  are shown in the columns of the following table.  These values depend on the congruence class of $n$ modulo $2$, $6$, $4$, and $10$, respectively.  The first column of the table gives the congruence class for $n$. 
$$
\begin{array}{r||l|l|l|l}
    &r_{2,2}(n)-r_{2,3}(n)      &  r_{3,2}(n) - r_{3,3}(n) &r_{4,2}(n) - r_{4,3}(n)  &r_{5,2}(n) - r_{5,3}(n)   \\
n \pmod m & m = 2               & m = 6                    & m = 4                    & m = 10\\
    \hline \hline
0   &\fl{(n+2)/4}               &2\fl{n/6}                  & \fl{n/4}                 &3\fl{n/10}\\
1   &0                          &\fl{n/6}                   & \fl{n/4}                 &4\fl{n/10}\\
2   &                           &\fl{n/6}+1                 & \fl{n/4} + 1             &3\fl{n/10}\\
3   &                           &2\fl{n/6} +1               & \fl{n/4} + 1             &3\fl{n/10}+1\\
4   &                           &\fl{n/6}+1                 &                          &3\fl{n/10}+1\\
5   &                           &\fl{n/6}+1                 &                          &3\fl{n/10}+2\\
6   &                           &                           &                          &4\fl{n/10}+2\\             
7   &                           &                           &                         &3\fl{n/10}+2\\  
8   &                           &                           &                         &3\fl{n/10}+2\\
9   &                           &                           &                         &3\fl{n/10}+3
\end{array}
$$
\end{conjecture}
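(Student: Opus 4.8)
The plan is to reduce each column, for fixed $d\in\{2,3,4,5\}$, to an elementary count of two-part partitions, granting Conjecture~\ref{prek:con}. The first step is to determine when a partition of $\ImP_k(n)$ is $d$-regular. A part of $\pre_k(\la)$ is a product of $k$ distinct parts of $\la$, and since $\ell(\la)\ge k$ every part of $\la$ occurs in at least one such product. Thus for $d$ prime (the cases $d=2,3,5$) the partition $\pre_k(\la)$ is $d$-regular if and only if no part of $\la$ is divisible by $d$, that is, $\la$ is itself $d$-regular. For $d=4$, a product of parts is divisible by $4$ exactly when the $2$-adic valuations of the factors sum to at least $2$, so $\pre_k(\la)$ is $4$-regular if and only if $\la$ has no part divisible by $4$ and at most one even part (necessarily $\equiv 2\ (\Mod 4)$). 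In each case let $C_d$ denote the resulting condition on $\la$; it depends only on the multiset of parts of $\la$, and, crucially, it is the same condition for $k=2$ and for $k=3$, since $\cP_2(n)$ and $\cP_3(n)$ differ only in the length threshold.

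Granting that $\pre_2$ and $\pre_3$ are injective, each $d$-regular partition of $\ImP_k(n)$ has a unique preimage, that preimage satisfies $C_d$, and conversely each $\la\vdash n$ with $\ell(\la)\ge k$ satisfying $C_d$ contributes a distinct $d$-regular partition to $\ImP_k(n)$. Hence $r_{d,k}(n)=|\{\la\vdash n:\ell(\la)\ge k,\ C_d(\la)\}|$ for $k=2,3$, and subtracting the two counts,
$$
r_{d,2}(n)-r_{d,3}(n)=|\{\la\vdash n:\ell(\la)=2,\ C_d(\la)\}|.
$$
This enumerates pairs $(a,b)$ with $a\ge b\ge 1$ and $a+b=n$ subject to the divisibility restriction packaged in $C_d$: for $d$ prime, $d\nmid a$ and $d\nmid b$; for $d=4$, that $a$ and $b$ are not both even and neither is divisible by $4$. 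An inclusion--exclusion over the residue classes of $a$ (equivalently of $b=n-a$) modulo $d$, restricted to $a\ge n/2$, writes the count explicitly as a combination of floor functions of $n$; evaluating it on the residue classes of $n$ modulo $2$, $6$, $4$, and $10$ for $d=2,3,4,5$ respectively produces the piecewise formulas recorded in the table.

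The principal obstacle is that Conjecture~\ref{prek:con} is itself unproven; everything above is unconditional once $\pre_2$ and $\pre_3$ are known to be injective on the partitions satisfying $C_d$. These classes are not entirely contained in the domains treated in the excerpt (binary or $d$-ary partitions, or partitions with at least two $1$'s), so one would need to establish injectivity on them directly. The natural tool is the multiplicity recursion~\eqref{m:rec}, reconstructing the multiplicities of $\la$ from those of $\pre_k(\la)$ by induction on the part size, just as in the proof that $\pre_2$ is injective on $d$-ary partitions; the constraint $C_d$ (which for $d=4$ forces $\la$ to have a single even part) should supply the bookkeeping needed to start that induction.
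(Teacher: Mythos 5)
This statement is a conjecture; the paper supplies no proof of it, so your proposal stands on its own.  Your reduction is the natural one and is surely what the authors intend (they remark that several of their conjectures ``will follow easily if Conjecture~\ref{prek:con} can be proved''): granting injectivity of $\pre_2$ and $\pre_3$, and using the fact that for these $d$ the $d$-regularity of $\pre_k(\la)$ is equivalent to a condition $C_d$ on $\la$ alone that is the same for $k=2$ and $k=3$, the difference $r_{d,2}(n)-r_{d,3}(n)$ collapses to the number of two-part partitions of $n$ satisfying $C_d$.  Your identification of $C_d$ is correct in all four cases, including the composite case $d=4$, and you are right that the needed injectivity is not covered by the cases proved in the paper.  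Beyond the acknowledged conditionality, however, there is one concrete failure: you assert, without carrying out the computation, that the elementary count ``produces the piecewise formulas recorded in the table.''  It does for $d=2,3,4$, but it does not for $d=5$.

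Take $d=5$ and $n=2$.  Unconditionally, $\cP_2(2)=\{(1,1)\}$ and $\cP_3(2)=\emptyset$, so $\ImP_2(2)=\{(1)\}$, $\ImP_3(2)=\emptyset$, and $r_{5,2}(2)-r_{5,3}(2)=1$, whereas the tabulated entry is $3\fl{2/10}=0$.  More generally, for $n\equiv 0\pmod{10}$ the number of two-part partitions $(a,b)$ of $n$ with $5\nmid a$ and $5\nmid b$ is $4\fl{n/10}$ (for $n=10$ these are $(9,1),(8,2),(7,3),(6,4)$, giving $4$), not the tabulated $3\fl{n/10}$; for $n\equiv1$ one gets $3\fl{n/10}$, not $4\fl{n/10}$; and so on.  Comparing your count with the printed column shows the $d=5$ entries appear to be shifted down by one residue class (with the wrapped top entry garbled).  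So completing your argument would not prove the conjecture as printed --- it would refute its $d=5$ column and establish a corrected version.  The step you wave through (``evaluating it on the residue classes \dots produces the piecewise formulas recorded in the table'') is exactly the step that fails, and performing it explicitly is necessary both to validate the other three columns and to surface that the statement itself needs repair before any proof of it can succeed.
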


\begin{conjecture}
If $p$ is prime then
$$
r_{p,1}(n) - r_{p,2}(n) =\begin{cases}
    0 & \text{if $p\mid n$},\\
    1 & \text{otherwise}.
\end{cases} 
$$
\end{conjecture}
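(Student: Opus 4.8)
The plan is to reduce the statement to a count of $p$-regular partitions of $n$ having a prescribed number of parts, using the conjectured injectivity of $\pre_2$ (Conjecture~\ref{prek:con}). First note that $\pre_1$ is the identity map, so $\ImP_1(n)=\cP_1(n)$ and $r_{p,1}(n)$ is just the number of $p$-regular partitions of $n$ with at least one part. The crucial structural observation is: \emph{for any $\la$ with $\ell(\la)\ge 2$, the partition $\pre_2(\la)$ is $p$-regular if and only if $\la$ is $p$-regular.} Indeed, if some part $\la_i$ of $\la$ is divisible by $p$, then since $\ell(\la)\ge2$ there is another part $\la_j$, and $\la_i\la_j$ is a part of $\pre_2(\la)$ divisible by $p$; conversely, if no part of $\la$ is divisible by $p$ then, $p$ being prime, no product $\la_i\la_j$ is divisible by $p$, so $\pre_2(\la)$ is $p$-regular. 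Primality is essential here: $\pre_2(2,2)=(4)$ is not $4$-regular even though $(2,2)$ is, so the hypothesis that $p$ is prime cannot be dropped.

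Granting this, the $p$-regular partitions lying in $\ImP_2(n)=\pre_2(\cP_2(n))$ are exactly the images under $\pre_2$ of the $p$-regular partitions in $\cP_2(n)$; and assuming $\pre_2$ is injective (it would suffice to know injectivity on $p$-regular partitions of length at least two), $\pre_2$ restricts to a bijection between these two sets. Hence $r_{p,2}(n)$ equals the number of $p$-regular partitions of $n$ with at least two parts. Since $\cP_2(n)\sbe\cP_1(n)$, the difference $r_{p,1}(n)-r_{p,2}(n)$ therefore counts the $p$-regular partitions of $n$ with exactly one part. For $n\ge1$ the only partition of $n$ of length one is $(n)$, which is $p$-regular precisely when $p\nmid n$, so the difference equals $1$ when $p\nmid n$ and $0$ when $p\mid n$. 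For $n=0$ one has $\cP_1(0)=\cP_2(0)=\emptyset$, so both quantities are $0$ and the difference is $0$, consistent with the stated formula since $p\mid0$. This settles the conjecture modulo injectivity.

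The main obstacle is supplying that injectivity without invoking Conjecture~\ref{prek:con}. One would attempt to adapt the proof that $\pre_2$ is injective on $d$-ary partitions: writing $\pre_2(\la)=\mu$, the recursion~\eqref{m:rec} gives $m_1(\mu)=\binom{m_1(\la)}{2}$, which recovers $m_1(\la)$ when $m_1(\mu)>0$, after which induction on $k$ recovers every $m_k(\la)$ with $k$ coprime to $p$ — using that $m_k(\la)=0$ when $p\mid k$, and that for $k\ge2$ the multiplicity $m_k(\la)$ enters~\eqref{m:rec} only through the term $m_1(\la)\,m_k(\la)$. The difficulty is the case $m_1(\mu)=0$, where $m_1(\la)\in\{0,1\}$: if $m_1(\la)=0$ the coefficient of $m_k(\la)$ in~\eqref{m:rec} vanishes and the recursion no longer determines the higher multiplicities, while the division-by-$d$ induction that rescued the $d$-ary case has no analogue here. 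Establishing injectivity of $\pre_2$ on $p$-regular partitions with no part equal to $1$ is precisely the gap; all the rest of the argument is the short bookkeeping above.
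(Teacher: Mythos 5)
This statement is posed as a conjecture in the paper and is not proved there; the authors only remark that ``in a number of cases, the conjectures will follow easily if Conjecture~\ref{prek:con} can be proved.'' Your reduction is exactly that easy derivation, and it is correct as far as it goes. Since $\pre_1$ is the identity, $r_{p,1}(n)$ counts all $p$-regular partitions of $n$ (with at least one part); your observation that for $\ell(\la)\ge2$ and $p$ prime the image $\pre_2(\la)$ is $p$-regular if and only if $\la$ is $p$-regular is right, and you correctly isolate where primality is used. Consequently the $p$-regular members of $\ImP_2(n)$ are exactly the $\pre_2$-images of the $p$-regular members of $\cP_2(n)$, and the conjectured identity is \emph{equivalent} to injectivity of $\pre_2$ on the $p$-regular partitions of $n$ of length at least two; the boundary checks at $n=0$ and at length one are also handled correctly. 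Even without injectivity your argument yields the unconditional inequality $r_{p,1}(n)-r_{p,2}(n)\ge \de(p\nmid n)$, which is worth stating explicitly.

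The gap you flag is genuine, however, and it is the entire difficulty: the required injectivity is precisely a special case of Conjecture~\ref{prek:con}, which remains open. Your diagnosis of why the paper's techniques do not close it is accurate. The recursion~\eqref{m:rec} recovers $m_1(\la)$ from $m_1(\mu)=\binom{m_1(\la)}{2}$ only when $m_1(\la)\ge2$ (the paper notes that $\pre_2$ is injective on partitions with at least two $1$'s), and for a $p$-regular partition with $m_1(\la)=0$ the coefficient of $m_k(\la)$ in~\eqref{m:rec} coming from the term $m_1(\la)m_k(\la)$ vanishes, while the division-by-$d$ descent that rescues the $d$-ary case has no analogue for $p$-regular partitions. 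So what you have is a correct \emph{conditional} proof together with an honest identification of the missing ingredient, namely injectivity of $\pre_2$ on $p$-regular partitions with at most one part equal to $1$; nothing in the paper supplies that, and the statement remains a conjecture.
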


The next conjecture is interesting because one side of the hoped-for equality depends on a parameter $k$ while the other does not.  If, $m<n$ then it will be useful to use the notation
\begin{align*}
\cP[m,n]&=\biguplus_{i=m}^n \cP(i),\\
\ImP_k[m,n]&=\biguplus_{i=m}^n \ImP_k(i).
\end{align*}
\begin{conjecture}
For $n\ge0$ and $k\ge2$,
\begin{align*}
  \De^{k-1}   m_4(\ImP_k[n,n+1])&=m_2(\cP(n))+m_4(\cP[n,n+1]),\\
  \De^{k-1}   m_6(\ImP_k(n)) &= m_6(\cP(n)) + m_2(\cP(n-2))-m_3(\cP(n-2)),\\
  \De^{k-1}   m_6(\ImP_k[n,n+1])&=m_3(\cP(n))+m_6(\cP[n,n+1]),\\
  \De^{k-1}   m_9(\ImP_k[n,n+2]) &= m_3(\cP(n)) + m_9(\cP[n,n+2]),\\
  \De^{k-1}   m_{10}(\ImP_k[n,n+1])&=m_5(\cP(n))+m_{10}(\cP[n,n+1]),\\
  \De^{k-1}   m_p(\ImP_k(n)) &= m_p(P(n)),
\end{align*}
where the last equality holds for $p=1$ or $p$ prime.
\end{conjecture}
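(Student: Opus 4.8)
The plan is to prove the conjecture conditionally on Conjecture~\ref{prek:con}, by generating functions; this is the natural route, since as remarked several of these conjectures reduce to the injectivity of $\pre_k$. The first step generalizes equation~\eqref{m:rec} and Lemma~\ref{2^k:ImB}. Assuming $\pre_k$ is injective on $\cP_k(n)$, I claim
$$
m_s\bigl(\ImP_k(n)\bigr) = \sum_{\la\in\cP(n)}\ \sum_{\mathbf f}\ \prod_{v\ge 1}\binom{m_v(\la)}{c_v(\mathbf f)},
$$
where $\mathbf f$ runs over the multiset factorizations $s=f_1\cdots f_k$ of $s$ into $k$ positive factors and $c_v(\mathbf f)=\#\{i:f_i=v\}$. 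Indeed, injectivity gives $m_s(\ImP_k(n))=\sum_{\la\in\cP_k(n)}m_s(\pre_k(\la))$; a part equal to $s$ in $\pre_k(\la)$ is a choice of $k$ positions of $\la$ whose parts multiply to $s$, and grouping such choices by the multiset of values used yields the inner sum. A nonzero summand forces $\ell(\la)\ge\sum_v c_v(\mathbf f)=k$, so the outer sum may be taken over all of $\cP(n)$.

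Next I would pass to generating functions in $n$. From $\sum_{m\ge0}\binom mc x^m=x^c/(1-x)^{c+1}$, the contribution of a fixed $\mathbf f$ is $P(q)\prod_{v\,:\,c_v(\mathbf f)>0}\bigl(q^v/(1-q^v)\bigr)^{c_v(\mathbf f)}$ with $P(q)=\prod_{j\ge1}(1-q^j)^{-1}$, so $\sum_n m_s(\ImP_k(n))q^n$ is the sum of these over all $\mathbf f$. I would also record three translations: $\De$ corresponds to multiplication by $(1-q)/q$ (legitimate here because $m_s(\ImP_k(n))$ vanishes for $n$ smaller than $k$, the lowest-degree term above having degree at least $k$), replacing $\ImP_k(n)$ by $\ImP_k[n,n+r]$ multiplies the series by $(1+q+\cdots+q^r)/q^r$, and the shift $n\mapsto n-r$ multiplies by $q^r$.

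The exponent $k-1$ is then explained by a cancellation. If $\mathbf f$ has $c_1$ ones and $k'=k-c_1$ factors that are $\ge2$, then in $\De^{k-1}m_s(\ImP_k(n))$ the factor $\bigl((1-q)/q\bigr)^{k-1}$ coming from $\De^{k-1}$ absorbs $\bigl(q/(1-q)\bigr)^{c_1}$ down to $\bigl((1-q)/q\bigr)^{k'-1}$, which multiplies the $k$-free expression $P(q)\prod_{v\ge2\,:\,c_v>0}\bigl(q^v/(1-q^v)\bigr)^{c_v}$. Hence $\De^{k-1}m_s(\ImP_k(n))$ depends only on those factorizations of $s$ into factors $\ge2$ that use at most $k$ of them; since for $s=1$, $s$ prime, and $s=4,6,9,10$ no such factorization uses more than two factors, the hypothesis $k\ge2$ already captures all of them and the value is independent of $k$. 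To finish, for each of the six values of $s$ I would list these factorizations, assemble the series from the previous paragraph, and compare with the series of the claimed right-hand side, using $\sum_n m_s(\cP(n))q^n=\frac{q^s}{1-q^s}P(q)$, the window/shift rules, and cyclotomic factorizations of $1-q^m$. For example $s=6$ gives
$$
\Bigl[\frac{q^6}{1-q^6}+\frac{1-q}{q}\cdot\frac{q^2}{1-q^2}\cdot\frac{q^3}{1-q^3}\Bigr]P(q)=\frac{q^4(1-q+2q^2)}{1-q^6}\,P(q),
$$
matching the series of $m_6(\cP(n))+m_2(\cP(n-2))-m_3(\cP(n-2))$; the cases $s=4,10$ (with the $[n,n+1]$ window), $s=9$ (with $[n,n+2]$), $s$ prime, and $s=1$ (the last using $\binom{m_1(\la)}{k}$ in place of $m_s(\la)\binom{m_1(\la)}{k-1}$) are handled the same way.

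The main obstacle is that the whole argument rests on Conjecture~\ref{prek:con}: without injectivity of $\pre_k$ the formula in the first step over-counts distinct images, and this dependence cannot be removed by the present method. Granting it, the only delicate points are the boundary bookkeeping needed to turn the finite differences into series multiplications, and the somewhat lengthy (but entirely routine) rational-function algebra for $s=9$ and $s=10$, where denominators of degree $9$ and $10$ enter.
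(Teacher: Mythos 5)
This statement is one of the paper's open conjectures; the authors give no proof of it, only the remark that several of their conjectures ``will follow easily if Conjecture~\ref{prek:con} can be proved.'' So there is no argument in the paper to compare yours against, and your proposal must be judged on its own terms. As a \emph{conditional} derivation it is correct, and the computations check out. Assuming $\pre_k$ is injective on $\cP_k(n)$, your formula
$$
m_s(\ImP_k(n))=\sum_{\la\in\cP(n)}\ \sum_{\mathbf f}\ \prod_{v\ge1}\binom{m_v(\la)}{c_v(\mathbf f)}
$$
is right, the generating function $P(q)\sum_{\mathbf f}\prod_{v:\,c_v>0}\bigl(q^v/(1-q^v)\bigr)^{c_v}$ follows, and the observation that $\bigl((1-q)/q\bigr)^{k-1}$ absorbs the factor $\bigl(q/(1-q)\bigr)^{c_1}$ contributed by the $1$'s is exactly what makes $\De^{k-1}m_s(\ImP_k(n))$ independent of $k$ once $k$ is at least the number of factors $\ge2$ in any factorization of $s$ (at most $2$ for $s\in\{1,4,6,9,10\}$ and for $s$ prime). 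I verified the six rational-function identities; for instance, for $s=9$ with window $[n,n+2]$ the $\{3,3\}$ term collapses to $q^3/(1-q^3)$, producing $m_3(\cP(n))$ exactly, and your $s=6$ computation $q^4(1-q+2q^2)/(1-q^6)$ matches the series of $m_6(\cP(n))+m_2(\cP(n-2))-m_3(\cP(n-2))$. The boundary conditions needed to replace $\De$ and the windowing by multiplication by $(1-q)/q$ and $(1+\cdots+q^r)/q^r$ also hold, since $m_s(\ImP_k(i))$ vanishes for $i$ below the minimum sum of $k$ positive integers with product $s$, and that minimum exceeds $k-2+r$ in every case appearing here.

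The genuine gap is the one you name yourself: everything rests on Conjecture~\ref{prek:con}, which is open (the paper establishes injectivity of $\pre_2$ only on $d$-ary partitions and on partitions with at least two $1$'s). Without injectivity your first identity overcounts, since $\ImP_k(n)$ is a set of image partitions, so a repeated image is counted once on the left but several times on the right, and nothing in the present method repairs that. What you have, then, is a correct and clean reduction of this conjecture to Conjecture~\ref{prek:con} --- very likely the reduction the authors had in mind --- rather than a proof of the statement.
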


We now make connections with some sequences in the OEIS.  To do so, we let  $\chi(\cS)$ be the total number of different parts in the set of partitions $S$, so multiple appearances of the same part are only counted once.  On the other hand, we let $\tau(\cS)$ be the total number of parts in $S$ counted with multiplicity.
\begin{conjecture}
For $n\ge0$,
\begin{align*}
\chi(\ImP_2(n)) &= A227800(n+1),\\
\chi(\ImB_2(n)) &= A126236(n),\\
\chi(\ImP_3(n)) &=1+ A213213(n),\\
\tau(\ImP_2(n)) &= A258472(n).
\end{align*}
\end{conjecture}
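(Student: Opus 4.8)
We outline how one might attack these four identities. They split into one statement about $\tau$ and three about $\chi$, and the two statistics behave very differently. Recall that $\tau(\cS)=\sum_{\mu\in\cS}\ell(\mu)$ counts parts \emph{with} multiplicity, so computing $\tau(\ImP_2(n))$ requires a bijection between $\cP_2(n)$ and its image $\ImP_2(n)$; that is exactly injectivity of $\pre_2$ on $\cP_2(n)$, the $k=2$ instance of Conjecture~\ref{prek:con}. By contrast $\chi(\cS)=\#\{v\ge1:m_v(\cS)>0\}$ records only \emph{which} values occur as parts somewhere in $\cS$; it is insensitive to multiplicities and to how many preimages a given $\mu$ has, so the three $\chi$-identities should follow \emph{unconditionally}, even though they mention $\ImP_2$ and $\ImP_3$, for which injectivity is only conjectural. (If instead one reads $\chi(\cS)=\sum_{\mu\in\cS}\#\{\text{distinct parts of }\mu\}$, the $\chi$-identities reduce, now also using Conjecture~\ref{prek:con}, to sums over $\la\vdash n$ of the number of distinct pairwise products of parts of $\la$, again handled by generating functions; the rest of the plan is unchanged.)

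For $\tau(\ImP_2(n))$: the partition $\pre_2(\la)$ has exactly $\binom{\ell(\la)}{2}$ parts counted with multiplicity, so granting injectivity
$$
\tau(\ImP_2(n))=\sum_{\mu\in\ImP_2(n)}\ell(\mu)=\sum_{\la\in\cP_2(n)}\binom{\ell(\la)}{2}=\sum_{\la\vdash n}\binom{\ell(\la)}{2},
$$
the last equality because one-part partitions contribute $0$. Differentiating $\sum_{\la}x^{\ell(\la)}q^{|\la|}=\prod_{i\ge1}(1-xq^i)^{-1}$ twice in $x$ and setting $x=1$ gives $\sum_{n\ge0}\tau(\ImP_2(n))\,q^n$; rewriting this in terms of $p(n)$ and divisor sums and comparing with the recorded generating function for A258472 completes this case.

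For the $\chi$-identities the plan is to pin down exactly which integers occur as parts of the image. A value $v$ is a part of some $\pre_2(\la)$ with $\la\vdash n$ and $\ell(\la)\ge2$ if and only if $v=ef$ for a divisor pair $e\le f$ of $v$ with $e+f\le n$ (taking two copies of $e$ when $e=f$): ``only if'' since $e,f$ are parts of $\la$, and ``if'' by padding $(f,e)$ or $(e,e)$ with $1$'s up to weight $n$. Since $e+f$ is minimized over divisor pairs of $v$ at the central pair $(d,v/d)$ with $d=\max\{e\mid v:e\le\sqrt v\}$,
$$
\chi(\ImP_2(n))=\#\Big\{v\ge1:\ d+\tfrac vd\le n,\ d=\max\{e\mid v:\ e\le\sqrt v\}\Big\},
$$
to be matched with A227800 up to the index shift in the statement. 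The binary case specializes to $v=2^k$, whose central divisor pair is $(2^{\fl{k/2}},2^{\ce{k/2}})$, so $\chi(\ImB_2(n))=\#\{k\ge0:2^{\fl{k/2}}+2^{\ce{k/2}}\le n\}$, an essentially explicit step function of $n$, to be identified with A126236. The ternary case runs the same way with $v=efg$ a product of three parts and the condition ``$e+f+g\le n$ for the most balanced factorization of $v$ into three factors'', matched with A213213 after checking the additive constant $1$ and the small-$n$ values.

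The main obstacle is of two kinds. For $\tau(\ImP_2(n))$ the argument rests entirely on injectivity of $\pre_2$ on all of $\cP_2(n)$ — the open Conjecture~\ref{prek:con} — and without it one obtains only $\tau(\ImP_2(n))\le\sum_{\la\vdash n}\binom{\ell(\la)}{2}$. For the three $\chi$-identities the reduction above is elementary and unconditional, so the difficulty migrates to the last step: one needs independently established generating functions or closed forms for A227800, A126236 and A213213, must reconcile the precise offsets and the extra constant in the ternary case, and — for the general and ternary statistics — must show that the divisor-theoretic count above agrees with whatever description the OEIS entry carries, which may itself be a nontrivial arithmetic identity. Of the four, $\chi(\ImB_2(n))$ is by far the most accessible, being unconditional and reducing to an explicit formula.
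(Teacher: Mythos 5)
This statement is one of the items in the paper's final ``Conjectures and future work'' section: the authors offer no proof of it, so there is no argument of theirs to compare yours against. What you have written is accordingly best judged as a reduction/attack plan, and as such it is largely sound and genuinely useful --- but it is not a proof, and the places where it stops are exactly where the mathematical content of the conjecture lives.

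Two specific comments. First, your analysis of the two statistics is correct and is the right first observation: $\tau(\ImP_2(n))=\sum_{\la\vdash n}\binom{\ell(\la)}{2}$ \emph{does} require injectivity of $\pre_2$ on $\cP_2(n)$, which is the open $k=2$ case of Conjecture~\ref{prek:con} (the paper proves injectivity only for $d$-ary partitions and for partitions with at least two $1$'s), whereas under the reading $\chi(\cS)=\#\{v:m_v(\cS)>0\}$ the three $\chi$-identities are insensitive to collisions and your divisor-pair criterion ($v$ occurs in $\ImP_2(n)$ iff $d+v/d\le n$ for the central divisor $d\le\sqrt v$, with the binary and ternary specializations) is correct; the padding-with-$1$'s argument is fine. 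You should resolve the definitional ambiguity rather than carry both readings: the two readings already diverge at $n=4$ (they give $4$ versus $5$ for $\chi(\ImP_2(4))$), so a single comparison with the OEIS data settles which one the authors intend, and only one of your two branches then needs to be pursued. Second, and more importantly, every one of the four identities ends in your plan with ``compare with the recorded generating function / description of A227800, A126236, A213213, A258472,'' and for at least the first and third of these that comparison is not bookkeeping: you would be proving that a divisor-theoretic count (number of $v$ whose most balanced two- or three-factor splitting has small sum) coincides with an independently defined OEIS sequence, which is itself an arithmetic identity requiring proof. So the proposal correctly isolates what is conditional and what is unconditional, and reduces the $\chi$-statements to concrete number theory, but it does not close any of the four identities; the one you flag as most accessible, $\chi(\ImB_2(n))=\#\{k\ge0:2^{\fl{k/2}}+2^{\ce{k/2}}\le n\}$, is indeed the natural place to start turning this into a theorem.
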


Let us end by discussing how our ideas could be extended to other symmetric polynomials.
There are many standard bases for the algebra of symmetric polynomials, for example, the monomial, complete homogeneous, and Schur bases.  
For more information about symmetric polynomials, see the texts of Macdonald~\cite{mac:sfh},
Sagan~\cite{sag:sg,sag:aoc}, or Stanley~\cite{sta:ec2}.
One could evaluate any of these on a partition and use the summands as parts. 
To illustrate, consider the complete homogeneous symmetric polynomial
$$
h_k(x_1,x_2,\ldots,x_\ell) = \text{ the sum of all degree $k$ monomials in the $x_i$.}
$$
As an example,
$$
h_3(x_1,x_2) = x_1^3 + x_1^2 x_2 + x_1 x_2^2 + x_2^3.
$$
So for the partition $\la=(4,3)$ we have
$$
h_3(4,3) = 4^3 + 4^2\cdot 3 + 4 \cdot 3^2 + 3^3
$$
and we could define a partition $\prh(3,1)$ by
$$
\prh(3,1) = (64,48,36,27).
$$
It may well be that partitions formed from other bases will also have interesting properties.

\section{Declarations}

{\bf Funding and/or Conflicts of interests/Competing interests.}   The authors did not receive support from any organization for the submitted work.  The authors have no relevant financial or non-financial interests to disclose.  On behalf of all authors, the corresponding author states that there is no conflict of interest.

\nocite{*}
\bibliographystyle{alpha}

\end{document}